\documentclass[11pt]{amsart}
 
\usepackage{mathtools}
\mathtoolsset{showonlyrefs,showmanualtags}
 
\usepackage{amssymb, amsmath, amsthm, esint}
\usepackage{mathrsfs}
\usepackage{bbm, dsfont}
\usepackage{color}
\usepackage{graphicx}

\usepackage[top=1.5in, bottom=1.5in, left=1.25in, right=1.25in]{geometry}
\usepackage[all]{xy}
\usepackage{amscd}
 \usepackage{microtype}
\usepackage[bottom]{footmisc}
 
\usepackage{fancyhdr}
 
\AtBeginDocument{\setlength{\footskip}{24pt}}
\fancypagestyle{plain}{%
  \fancyhf{}%
  \fancyfoot[C]{\thepage}%
}
 
\fancypagestyle{myfancy}{%
  \fancyhf{}%
  \fancyhead[CE]{\scshape Pointwise Convergence Along Integer Cantor Sets}%
  \fancyhead[CO]{F.\ Brokering Pinilla, A.\ Iosevich and B.\ Krause}%
  \fancyhead[LE,RO]{\thepage}%
}
 
\numberwithin{equation}{section}

\newtheorem{theorem}{Theorem}[section]
\newtheorem*{theorem*}{Theorem}

\newtheorem{proposition}[theorem]{Proposition}
\newtheorem*{proposition*}{Proposition}
\newtheorem{cor}[theorem]{Corollary}
\newtheorem{lemma}[theorem]{Lemma}
\newtheorem*{lemma*}{Lemma}
\newtheorem{remark}[theorem]{Remark}

\theoremstyle{definition}

\usepackage[
  pagebackref,
  colorlinks=true,
  linkcolor=blue,
  citecolor=magenta,
  filecolor=magenta,
  urlcolor=cyan
]{hyperref}

\makeatletter
\renewcommand*{\backref}[1]{}
\renewcommand*{\backrefalt}[4]{%
  \ifcase #1\relax
  \or
    \space (Page~#2)%
  \else
    \space (Pages~#2)%
  \fi
}
\makeatother

\begin{document}
 
\title{Pointwise Convergence of Ergodic Averages Along Integer Cantor Sets}
 
\author{F\'elix Brokering Pinilla}
\address{
Department of Mathematics,
University of Bristol \\
Beacon House, Queens Rd, Bristol BS8 1QU}
\email{felix.brokeringpinilla@bristol.ac.uk}

\author{Alex Iosevich}
\address{%
Department of Mathematics, University of Rochester \\
14627, Rochester, NY, Hylan Building, 915}
\email{iosevich@gmail.com}

\author{Ben Krause}
\address{%
  Department of Mathematics, University of Bristol \\
  Beacon House, Queens Rd, Bristol BS8 1QU}
\email{ben.krause@bristol.ac.uk}

\date{\today}
 
\begin{abstract}
Let $d \geq 3$, 
  \[ D \subsetneq \{ 0,1,\dots,d-1\}, \qquad |D| \geq 2, \ 0 \in D \]
  be a finite alphabet, and define the integer Cantor set
  \begin{align}
\mathcal{C} := \mathcal{C}_{D} := \bigcup_{J \geq 0} \Big\{ \sum_{j =0}^J a_j d^j : a_j \in D \Big\}.   
  \end{align}
We prove that for any $\sigma$-finite measure-preserving system, $(X,\mu,T)$, and any $f \in L^p(X)$, $2\leq p<\infty$, the ergodic averages
\begin{align}
    \frac{1}{|\mathcal{C}_N|} \sum_{n \in \mathcal{C}_N } f(T^n x), \qquad \mathcal{C}_N := \mathcal{C} \cap \{1,2,\dots,N \}
\end{align}
  converge $\mu$-almost everywhere. By rescaling, this allows us to resolve the question of lacunary differentiation of Cantor measures at self-similar scales: if 
\begin{align}
    \mathcal{C}' := \Big\{ \sum_{j \geq 1} a_j d^{-j} : a_j \in D \Big\} \subset [0,1]
\end{align}
is a real-variable Cantor set, and $\nu$ denotes its natural measure, then we prove that
\begin{align}
    \lim_{k \to \infty} \int f(x-d^{-k} t) \ d\nu(t) = f(x)
\end{align}
Lebesgue almost-everywhere for any $f \in L^2_{\text{loc}}(\mathbb{R})$.
\end{abstract}
 
\maketitle
 
\setcounter{footnote}{0} 
 
\pagestyle{myfancy}
 

\section{Introduction}
The study of integer Cantor sets, i.e.\ those of the form
\begin{align}\label{e:cantor}
\mathcal{C} := \mathcal{C}_{D} := \bigcup_{J \geq 0} \Big\{ \sum_{j=0}^J a_j d^j : a_j \in D \Big\},
\end{align}
where
\[
D \subsetneq \{0,1,\dots,d-1\}, \qquad d \geq 3, \quad q:= |D| \geq 2, \quad 0 \in D,
\]
goes back at least to work of Fine \cite{Fine}, but has had many rich extensions; in recent years, Maynard established the existence of infinitely many primes inside these sets \cite{Maynard}, Leng--Sawhney established a ternary Goldbach-type result \cite{LS}, and Green demonstrated Waring-type phenomena \cite{Green}. Most relevant to our current context is \cite{B+}, which established the following dynamical theorem.

\begin{theorem}\label{t:lacey}
Let $(X,\mu)$ be a probability space equipped with a measure-preserving transformation $T:X\to X$. Then for any integer Cantor set of the form \eqref{e:cantor}, and any $f\in L^2(X)$, the averages
\begin{align}\label{e:avg}
C_N^T f(x)
:= \frac{1}{|\mathcal{C}_N|}\sum_{n\in\mathcal{C}_N} f(T^n x),
\qquad
\mathcal{C}_N:=\mathcal{C}\cap\{1,\dots,N\},
\end{align}
converge in $L^2(X)$.
\end{theorem}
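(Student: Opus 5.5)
By the spectral theorem, it suffices to show that the Fourier multipliers
\[
m_N(\theta) := \frac{1}{|\mathcal{C}_N|}\sum_{n\in\mathcal{C}_N} e(n\theta), \qquad \theta \in \mathbb{T},
\]
converge pointwise on $\mathbb{T}$ as $N\to\infty$. Indeed, $C_N^T f = m_N(U)f$, where $U$ is the Koopman operator, and $|m_N|\le 1$. Pointwise convergence of $m_N$ then gives convergence of $\int |m_N - m_M|^2\,d\mu_f \to 0$ by dominated convergence with respect to the spectral measure $\mu_f$. So the plan is to compute $m_N$ and identify its limit. This limit should be $1$ at $\theta=0$, and $0$ for all other $\theta$ except possibly on a small exceptional set where it equals some explicit limit.

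\textbf{Step 1 (full blocks).} For $N$ of the form $N_J = d^{J}$ (more precisely $\mathcal{C}\cap[0,d^J)$), the set is a product, so
\[
m_{d^J}(\theta) = \prod_{j=0}^{J-1} \phi(d^j\theta), \qquad \phi(\xi):=\frac1q\sum_{a\in D} e(a\xi).
\]
Since $|\phi|\le 1$, with equality iff $a\xi\in\mathbb{Z}$ for all $a\in D$, the product either tends to $0$ or converges. It converges whenever $\theta$ has $d^j\theta \to 0$ in $\mathbb{T}$ fast enough (e.g.\ $d$-adic rationals, where it stabilizes). In general I would handle this by grouping: if $\|d^j\theta\|$ is bounded below by some fixed $\delta$ along infinitely many $j$, then $|\phi(d^j\theta)|\le 1-c(\delta)$ infinitely often, while the product is monotone in modulus. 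Either the modulus tends to $0$, or $\sum_j (1-|\phi(d^j\theta)|)<\infty$, which forces $\sum \|d^j \theta \cdot g\|^2<\infty$ with $g=\gcd$ of the differences of $D$ (after normalizing). Expanding $g\theta$ in base $d$, this means eventually all digits are $0$ (or $d-1$), i.e.\ $g\theta$ is a $d$-adic rational. In that case the product visibly converges, including its phase.

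\textbf{Step 2 (general $N$).} Write $N$ in base $d$ with leading digit at position $J$. Then $\mathcal{C}_N$ decomposes as a disjoint union of translates $t+\mathcal{C}\cap[0,d^{k})$, with $k\le J$, indexed by the admissible prefixes. Thus $m_N(\theta)$ is a convex combination, with weights $q^{k}/|\mathcal{C}_N|$, of terms $e(t\theta)m_{d^k}(\theta)$. The weights on blocks with $k\le J-L$ sum to $O(q^{-L})$ uniformly in $N$. Hence off the exceptional set, $m_N(\theta)\to 0$, because the blocks with $k\ge J-L$ have $|m_{d^k}(\theta)|\to0$. On the exceptional set ($g\theta$ a $d$-adic rational), I still need convergence of the combination. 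Here I would reduce to $\theta$ with $g\theta\in d^{-r}\mathbb{Z}$. Then $e(t\theta)$ for the prefixes depends only on finitely many low digits of $t$ modulo $g$-type periodicity. This looks problematic, since the phases could oscillate with $N$. I expect this to be the main obstacle.

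I would try first to observe that $\phi(d^j\theta)$, for large $j$ and such $\theta$, equals $\phi(d^j\theta)$ with $d^j\theta \in \frac1g\mathbb{Z}$ fixed-ish. So either some factor has modulus $<1$ infinitely often, forcing $m_{d^k}\to0$ and hence $m_N\to0$ by Step 2, or every $a\in D$ satisfies $a\cdot d^j\theta\in\mathbb{Z}$ for large $j$. In the latter case every element of $\mathcal{C}$ above the low digits contributes phase $1$, so $e(n\theta)$ depends only on the bottom $r$ digits of $n$. Then $m_N(\theta)$ is asymptotically the average of $e(n\theta)$ over the bottom-digit pattern, which equals the fixed product $\prod_{j<r}\phi(d^j\theta)$, up to error $O(q^{r-J})$. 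This establishes pointwise convergence of $m_N$ everywhere, and hence the theorem.
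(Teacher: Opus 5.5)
\textbf{Verdict: correct, by a genuinely different route.} Your argument is correct. The paper gives no proof of this statement; it is quoted from \cite{B+}. Inside the paper it is recovered only as a by-product of the stronger results: a.e.\ convergence from Theorem \ref{t:main0}, plus the $L^2$ majorant from the transferred Corollary \ref{c:all-prefix-max}, give norm convergence by dominated convergence. That route rests on L\'epingle's inequality and the randomization of Lemma \ref{l:firstvar}.

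Your route is different and uses only Hilbert-space structure. You reduce norm convergence to convergence of the symbol $m_N(\theta)$ at every frequency, and you obtain that from two ingredients:
\begin{itemize}
\item The dichotomy that $\prod_j|\phi(d^j\theta)|$ either tends to $0$ or satisfies $\sum_j Q(d^j\theta)<\infty$, and the latter forces $g\theta\in\mathbb Z[1/d]$. This is the qualitative shadow of the paper's telescoping bound $\sum_J|P_J|^2Q_J\le 1$.
\item A digit-prefix decomposition of $\mathcal C\cap[0,N]$ into translates of $\mathcal D_k$, which is your analogue of Lemma \ref{l:prefix-factorization}.
\end{itemize}
What your approach buys is elementarity and an explicit limit: $m_N(\theta)\to\prod_{j<r}\phi(d^j\theta)$ when $g\theta\in d^{-r}\mathbb Z$, and $m_N(\theta)\to 0$ otherwise. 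What it cannot give is maximal or variational control, so it says nothing about a.e.\ convergence; supplying that control is what the paper's machinery is for.

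\textbf{Two points to tidy.}
\begin{itemize}
\item \emph{Non-invertible $T$.} Since $T$ need not be invertible, the Koopman operator $U$ is only an isometry, so the spectral theorem does not apply directly. Produce $\mu_f$ instead by Herglotz's theorem from the positive-definite sequence $\langle U^kf,f\rangle$, extended to $k<0$ by conjugation. Because $U^*U=I$, this still gives $\|\sum_n a_nU^nf\|^2=\int|\sum_n a_ne(n\theta)|^2\,d\mu_f$.
\item \emph{The phase oscillation in Step 2.} In the exceptional case, state explicitly why the oscillation you worried about does not occur. Each block at level $k\ge r$ has the form $t+\mathcal D_k$, where $t\in\mathcal C$ has all its base-$d$ digits in $D$ at positions $\ge k\ge r$. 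Hence $e(t\theta)=1$ exactly, and such a block contributes exactly $\prod_{j<r}\phi(d^j\theta)$. Blocks at levels $k<r$ carry total weight $O(q^{r-J})$, so they are negligible.
\end{itemize}
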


On the other hand, the issue of pointwise convergence of the averages \eqref{e:avg} was unaddressed: see \cite[Question 6.3]{B+} and also \cite[Question 1.6]{KS}, which focused on the more general case of rational-spectra IP sets.

Our main result establishes this convergence in slightly greater generality; for a description of the limit in the finite-measure setting, see \cite[Theorem 2.1]{B++}, or \cite[Theorem 3.6]{KS} more generally.

\begin{theorem}\label{t:main0}
For any $\sigma$-finite measure space $(X,\mu)$ equipped with a measure-preserving transformation $T:X\to X$, and any $f\in L^p(X)$, $2\leq p<\infty$, the averages \eqref{e:avg} converge $\mu$-almost everywhere.
\end{theorem}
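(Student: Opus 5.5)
By the Calderón transference principle, it suffices to work on $\mathbb{Z}$ with the shift. There I would prove two things: a maximal inequality on $\ell^p(\mathbb{Z})$ for $2\le p<\infty$, and an oscillation or variational inequality for a lacunary subsequence of scales. The standard reduction is then to the scales $N=d^K$. For general $N$, write $N$ in base $d$. The set $\mathcal{C}_N$ decomposes into at most $K$ blocks of the form $m+\mathcal{C}_{d^k}$, with $m\in\mathcal{C}$ and $k\le K$, together with a boundary contribution. Each block is a translate of a self-similar scale, and the weights $|\mathcal{C}_{d^k}|/|\mathcal{C}_N|$ decay geometrically in $K-k$ because $q\ge 2$. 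So the full averages are controlled by a geometrically weighted sum of translated dyadic-type averages. The translates are $m\le N$, which are harmless in the limit once we control the $d^K$-scale averages pointwise and uniformly. More precisely, I would show that $C_N f - C_{d^{K}}f$ is dominated by averages at scales $d^k$ with $K-k$ large, weighted by $q^{-(K-k)}$. Pointwise convergence along $N$ then follows from convergence along $d^K$ together with the maximal inequality.

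For the self-similar scales, $A_K f := q^{-K}\sum_{n\in \mathcal{C}_{d^K}} f(x-n)$ is convolution with $\mu_K=\ast_{j<K}\nu_j$, where $\nu_j$ is uniform on $d^jD$. Its multiplier is the Riesz-type product
\[
\widehat{\mu_K}(\xi)=\prod_{j<K}\phi(d^j\xi),\qquad \phi(\xi)=q^{-1}\sum_{a\in D}e(a\xi).
\]
Then $A_{K+1}=A_K\ast\nu_K$. I plan to prove an $\ell^2$ variational (or square function) bound for $A_K$ and use it to get almost-everywhere convergence.

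The key step is a square function estimate $\sum_K\|(A_{K+1}-A_K)f\|_2^2\lesssim\|f\|_2^2$. This reduces to $\sum_K|\widehat{\mu_K}(\xi)|^2|1-\phi(d^K\xi)|^2\lesssim1$ uniformly in $\xi$. Since $|\phi|<1$ away from the lattice $\mathbb{Z}$, each factor with $d^j\xi$ far from $\mathbb{Z}$ (i.e.\ having a "nonzero digit") contributes a factor $\le c<1$. The terms are therefore bounded by $c^{\#\{j<K:\ \|d^j\xi\|\gtrsim1\}}$ times $\|d^K\xi\|$. Here I would use base-$d$ digit counting: the number of small $\|d^j\xi\|$ values between good digits yields geometric decay. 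In this counting one has to handle the case where $\phi(d^j\xi)$ is close to $1$ in modulus but not in value, i.e.\ resonance; this occurs only near rationals with denominator a power of $d$, since $0\in D$ forces $\phi\approx1$ only near integers unless $D$ lies in a coset structure. If $D\subset g\mathbb{Z}$ with $\gcd(D)=g>1$, I would first rescale. This uniform-in-$\xi$ digit estimate is the main obstacle, and I would handle it by grouping consecutive scales into runs.

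The square function bound gives $\ell^2$ control of the martingale-like differences. To upgrade to pointwise convergence and $\ell^p$, I would exploit that $\mu_K$ has a nested, martingale-like structure: $\mu_{K}=\mu_{K-1}\ast\nu_{K-1}$ with $\mathrm{supp}\,\mu_K\subset[0,d^K)$. I would compare $A_K$ to the conditional expectation $\mathbb{E}_K$ onto $d^K$-intervals. This gives the splitting $A_K=\mathbb{E}_K+(A_K-\mathbb{E}_K)$, where the first term has a Doob maximal and variational bound on every $\ell^p$. The second term is handled by the square function above, or by its $\ell^p$ analogue obtained by interpolating the $\ell^2$ decay with trivial $\ell^\infty$ bounds. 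That interpolation yields the range $2\le p<\infty$, and almost-everywhere convergence then follows in the standard way, first on a dense class and then by the maximal inequality.
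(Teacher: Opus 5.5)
\textbf{The martingale comparison fails, and this gap is fatal.} A bound on $\sum_K\|(A_{K+1}-A_K)f\|_2^2$ cannot by itself give convergence, since square-summable increments do not force a sequence to converge. It is also not the obstacle you expect: it is immediate from $|1-m_K|^2\lesssim Q_K$ and $\sum_K|P_K|^2Q_K=\sum_K(|P_K|^2-|P_{K+1}|^2)\le 1$. So everything rests on comparing your $A_K$ (the paper's $B_K$, with multiplier $P_K$) with the conditional expectation $\mathbb E_K$ onto $d^K$-adic intervals. That is, you need $\sum_K\|(A_K-\mathbb E_K)f\|_2^2\lesssim\|f\|_2^2$, and this is false because $P_K$ does not decay at $d$-adic rational frequencies (the Cantor measure is not Rajchman). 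Concretely, take $D=\{0,1\}$, $d=3$. Since $m=1$ on the integers, $P_K(1/3)=m(1/3)$ for all $K\ge1$, with $|m(1/3)|=\tfrac12$. On the other hand, averaging over intervals of length $3^K$ annihilates $e(x/3)$. For $f(x)=e(x/3)\phi(x/L)$ with $\phi$ a smooth bump, one gets $\|(A_K-\mathbb E_K)f\|_2=(\tfrac12+O(3^K/L))\|f\|_2$ for every $1\le K\le\log_3 L-O(1)$. Hence your square function is $\gtrsim(\log L)^{1/2}\|f\|_2$.

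The missing idea is a martingale adapted to the digits rather than to $d$-adic intervals. The paper lifts to $\mathbb Z\times\Omega$ with independent random digits and writes $\sum_{k\le J}\mathbf v_k\prod_{\ell<k}\mathbf m_\ell$ as an average over random translates of partial sums of a reverse martingale; it then applies L\'epingle (Lemma \ref{l:firstvar}). This controls $\sum_{k\le J}P_ku_k$ only when $|u_k|\lesssim Q_k$, and only after a parity splitting that makes the randomized coefficients square-summable by telescoping (Lemma \ref{l:var2}). Since $m_k-1$ vanishes only to first order on $\mathcal Z$, its linear part $-i(\overline a_D/g)H_k$ has to be removed separately. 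The paper does this with the exact identity $(d-1)\sum_{k\le J}P_kH_k=\sum_{k<J}P_kG_k-H_0+dP_JH_J$, whose endpoint term is again square-summable.

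\textbf{The reduction to $N=d^K$ is also flawed.} For $d^K\le N<d^{K+1}$, the top-level blocks $\delta d^K+\mathcal D_K$, $\delta\in D\setminus\{0\}$, each carry weight at least $1/q$ in $C_N$. So $C_Nf-C_{d^K}f$ is not dominated by $q^{-(K-k)}$-weighted small-scale terms; it contains full-weight differences such as $B_Kf(x-\delta d^K)-B_Kf(x)$. Convergence of $B_Kf(x)$ says nothing about $B_Kf(x-\delta d^K)$, because the evaluation point moves with $K$. Lacunary convergence plus the maximal inequality, using nestedness and positivity, only determines $C_Nf$ up to a factor $q$. What you need is convergence along meshes of ratio $1+q^{-s}$ for every $s$, which is the role of the paper's $\mathcal G_s$. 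The shifted top blocks are harmless only because the shifts lie in $gd^K\mathbb Z$, and this must be proved. The paper writes $\sigma_{uq^n}(\beta)-P_{n+s}(\beta)=P_n(\beta)(\sigma_u-P_s)(d^n\beta)$ with $|\sigma_u-P_s|^2\lesssim_s Q$, so the discrepancies are square-summable by the same telescoping. Finally, for $2<p<\infty$, interpolate the maximal function, which is trivially $\ell^\infty$-bounded, and use density of $L^2\cap L^\infty$. A square function has no trivial $\ell^\infty$ bound, so your proposed interpolation of the square function does not work.
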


We prove Theorem \ref{t:main0} through re-parameterized averages: if $\Phi : \mathbb{Z}_{\geq 0} \to \mathcal{C}$ is the bijective digit-substitution map defined in \eqref{e:Phi}, we establish convergence of the averages
\begin{align}\label{e:ergodic-count-prefix}
A_M^T f(x):=\frac1M\sum_{m=0}^{M-1}f(T^{\Phi(m)}x).
\end{align}

In fact, we establish a quantitative version of Theorem \ref{t:main0}; to state it, recall that the homogeneous $r$-variation of a scalar sequence is
\begin{align}\label{e:variation-definition}
\mathcal V^r(a_n:n)
:=\sup_{n_0<\cdots<n_J}
\big(\sum_{j=1}^J|a_{n_j}-a_{n_{j-1}}|^r\big)^{1/r},
\qquad 0<r<\infty,
\end{align}
where the supremum runs over all finite increasing subsequences; note that these quantities decrease pointwise as $r$ increases. For a sequence of functions, variation is defined pointwise:
\begin{align}\label{e:fxnvar}
\mathcal V^r(f_n:n)(x):=\mathcal V^r(f_n(x):n);
\end{align}
since finite $r$-variation forces convergence of the scalar sequence, norm estimates on \eqref{e:fxnvar} are a powerful way to prove pointwise almost everywhere convergence.

Variational methods entered harmonic analysis through martingale convergence questions \cite{LE}, and have since become a central tool for quantifying pointwise convergence of operator families; see \cite{JSW,BK} for surveys. We establish Theorem \ref{t:main0} by way of the following; prior work of Bourgain \cite{B4} established a maximal estimate when $q = 2$, and identified the limit in the finite measure setting in this case as well.

\begin{theorem}\label{t:main}
For any $\sigma$-finite measure space $(X,\mu)$ equipped with a measure-preserving transformation $T:X\to X$, every integer $s\geq0$, and every $r>2$,
\begin{align}\label{e:quant-erg}
\Big\|\mathcal V^r(A_M^T f:M\in\mathcal G_s)\Big\|_{L^2(X)}
\lesssim_{d,D,s}\frac{r}{r-2}\|f\|_{L^2(X)},
\end{align}
where
\begin{align}\label{e:Gs}
\mathcal G_s:=\{uq^n:u,n\in\mathbb Z_{\geq0},\ q^s\leq u<q^{s+1}\},
\end{align}
and the times in $\mathcal G_s$ are read in increasing numerical order.\footnote{See Subsection \ref{sss:O} below for the definition of $\lesssim_{d,D,s}$.}
\end{theorem}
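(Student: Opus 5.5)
The plan is to reduce \eqref{e:quant-erg} to a variational estimate along the \emph{lacunary} index $n$, and to bound that by a martingale-type argument. First, by a standard Calder\'on transference argument it suffices to work on $\mathbb Z$ with the shift. Next I exploit the digit structure of $\Phi$: if $\mu_n:=q^{-n}\sum_{m<q^n}\delta_{\Phi(m)}$, then $\mu_n=\nu_0*\nu_1*\cdots*\nu_{n-1}$ with $\nu_j:=q^{-1}\sum_{a\in D}\delta_{ad^j}$. For $M=uq^n\in\mathcal G_s$ we have the decomposition
\begin{align}
A^T_{uq^n}f=\frac1u\sum_{v<u}T^{\Phi(v)d^n}\,\mu_n f ,
\end{align}
since every $m<uq^n$ is $m=vq^n+m'$ with $m'<q^n$. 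Here $\mu_nf:=\sum_k\mu_n(k)f(T^k\cdot)$.

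Since $q^s\le u<q^{s+1}$ ranges over finitely many values, the times in $\mathcal G_s$ split into consecutive blocks indexed by $n$, each containing $O_{d,D,s}(1)$ times. I bound the variation by a long variation plus a short variation. The \emph{long} part is the sum over the finitely many pairs $(u,v)$ of $\mathcal V^r(T^{\Phi(v)d^n}\mu_nf:n)$. The \emph{short} part is the square function $(\sum_n\sum_{u}|A_{uq^n}f-A_{q^{s}q^n}f|^2)^{1/2}$, including the comparison between the last time of one block and the first of the next.

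For the short part I use $T^{\Phi(v)d^n}\mu_n-\mu_n$ and Plancherel/spectral theorem. The multiplier is $m_n(\xi)(e(\Phi(v)d^n\xi)-1)$, with $m_n(\xi)=\prod_{j<n}\varphi(d^j\xi)$ and $\varphi(\theta)=q^{-1}\sum_{a\in D}e(a\theta)$. So I need a uniform bound of the form $\sum_n|m_n(\xi)|^2|1-e(c\,d^n\xi)|^2\lesssim 1$ for each fixed integer $c$.

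For the long part I would avoid Fourier decay altogether, since Cantor measures at integer base $d$ are not Rajchman. Instead I would realize $\mu_nf$ as a reverse martingale in an extended space. Let $\Omega=D^{\mathbb N}$ with uniform product measure, and write $S_{\ge n}(\omega)$ for the tail digits. The key identity is that $\mathbb E[F\mid\omega_{\geq n}]$ applied to $F(x,\omega)=f(T^{\sum_j\omega_jd^j}x)$ reproduces $T^{S_{\geq n}(\omega)}\mu_nf$. To avoid infinite sums I work with finite truncations $\omega\in D^N$ and send $N\to\infty$; all constants are uniform in $N$. Lépingle's inequality for martingales then gives $\|\mathcal V^r\|_{L^2(X\times\Omega)}\lesssim\frac r{r-2}\|f\|_2$. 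After that I must remove the random translate $T^{S_{\geq n}}$, i.e.\ compare $\mu_nf$ with the tail-translated version. I would do this by pairing the random translate with the fixed translates $T^{\Phi(v)d^n}$ in the formula above, again up to a square function of the type in the previous paragraph.

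The main obstacle I expect is the uniform square-function bound $\sum_n|m_n(\xi)|^2|1-\varphi(d^n\xi)|^2\lesssim1$ (and its translated variants). Telescoping gives $\sum_n|m_n|^2(1-|\varphi(d^n\xi)|^2)\le1$, which is fine. The difficulty is that $|1-\varphi|$ need not be controlled by $1-|\varphi|^2$ when $\varphi(\theta)$ is unimodular but $\ne1$, as happens when $\gcd(D)>1$. I would handle this by isolating the finite set of rationals where $|\varphi|=1$. The point is that the orbit $d^n\xi$ can stay near such points for many consecutive $n$ only if $\xi$ is near a rational with denominator dividing a power of $d$ times $\gcd(D)$. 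On those arcs the phase is eventually constant, so the differences vanish except for $O_{d,D}(1)$ indices. Away from those arcs the two quantities are comparable, and the constants then depend only on $d,D,s$.
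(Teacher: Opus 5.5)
Your reduction is correct and matches the paper's proof of Theorem~\ref{t:mainZ}. It uses transference, the factorization $\sigma_{uq^n}(\beta)=\sigma_u(d^n\beta)P_n(\beta)$, and the square-function bound $\sum_n|P_n|^2Q_n\le1$ for the within-block errors. This leaves only the lacunary estimate $\|\mathcal V^r(\mu_nf:n)\|_{\ell^2}\lesssim\frac r{r-2}\|f\|_{\ell^2}$, i.e.\ Proposition~\ref{p:lacprop}, which carries essentially all the difficulty. You do not need separate long variations for each $v$, since $q^{-s}\sum_{v<q^s}T^{\Phi(v)d^n}\mu_n=\mu_{n+s}$.

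The obstacle you single out is not real. Because $0\in D$, $|\varphi(\theta)|=1$ forces $\varphi(\theta)=1$, i.e.\ $\theta\in\mathcal Z$, so $|1-\varphi|^2\lesssim 1-|\varphi|^2$ by Taylor expansion. Likewise $|1-e(c\theta)|^2\lesssim Q(\theta)$ whenever $g\mid c$, which covers $c=\Phi(v)$. For general integers $c$ your stated bound can fail: take $d=3$, $D=\{0,2\}$, $c=1$, $\xi=\frac12$.

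The genuine gap is the removal of the random translate. L\'epingle controls the variation of $\Gamma_n(x,\omega):=\mu_nf(T^{S_{\ge n}(\omega)}x)$, but the translate changes with $n$, and no square function removes it. Here $S_{\ge n}(\omega)=d^n\Phi(V_n(\omega))$ with $V_n$ ranging up to $q^{N-n}$, so it is not of the form $c\,d^n$ with $c$ from a finite set.

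\emph{Why the comparison fails.} Take $0<|\xi|\le c_d\,d^{-n_0}$. Then $|m_n(\xi)|\gtrsim1$ for all $n\le n_0$, while for a.e.\ such $\xi$ one has $\mathbb E_\omega e(S_{\ge n}(\omega)\xi)=\prod_{j=n}^{N-1}\varphi(d^j\xi)\to0$ as $N\to\infty$. Hence
\[
\sum_n|m_n(\xi)|^2\,\mathbb E\big|e(S_{\ge n}(\omega)\xi)-e(\Phi(v)d^n\xi)\big|^2\gtrsim n_0 .
\]

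\emph{The natural repair does not help.} Lemma~\ref{l:firstvar} translates all $\Gamma_n$ by the same $S(\omega)$, which leaves the variation unchanged, and then averages in $\omega$. For your $F$ this produces the symmetrized multiplier $|m_n|^2$, not $m_n$. The difference $m_n-|m_n|^2=m_n(1-\overline{m_n})$ is not square-function controlled either. For $D=\{0,1\}$, $d=3$ and $|\xi-\frac13|\le 3^{-n_0}$, one has $m_n(\xi)=\varphi(\tfrac13)+O(3^{n-n_0})$ for $1\le n\le n_0$, with $\varphi(\tfrac13)=\frac14+\frac{\sqrt3}4 i$. So the square sum is $\gtrsim n_0$. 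This is exactly the non-Rajchman freezing you mention, and it defeats any comparison by square functions.

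\emph{What the paper does instead.} It avoids the naive martingale (your $m_n$ is the paper's $P_n$ up to conjugation, and $m_k(t)=m(d^kt)$ is a single factor):
\begin{itemize}
\item It expands $P_n=1+\sum_{k<n}P_k(m_k-1)$ and splits $m_k-1=-i\frac{\overline a_D}{g}H_k+R_k$ with $|R_k|\lesssim Q_k$.
\item It controls partial sums $\sum_{k\le J}P_ku_k$ with $|u_k|\lesssim Q_k$ (Lemma~\ref{l:var2}) by a martingale with tailored differences $c_j=v_j\overline{Z_j}/Q_{k_j}$. These are chosen so that common translation followed by averaging returns exactly the deterministic multiplier.
\item An even/odd splitting makes $\sum_j\mathbb E|c_j|^2=\sum_j|v_j|^2/Q_{k_j}$ telescope.
\item It removes the first-order part via $(d-1)\sum_{k\le J}P_kH_k=\sum_{k<J}P_kG_k-H_0+dP_JH_J$, where $|G_k|\lesssim Q_k$ and $\sum_J|P_JH_J|^2\lesssim1$.
\end{itemize}
The second-order vanishing $|u_k|\lesssim Q_k$ is what makes the randomized coefficients square-summable. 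Without some such device, your plan does not yield Proposition~\ref{p:lacprop}.
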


By Calder\'{o}n's transference principle \cite{C1}, Theorem \ref{t:main} is established by the following result concerning sequence-space functions.

\begin{theorem}\label{t:mainZ}
Let
\begin{align}\label{e:discrete-count-prefix-intro}
A_Mf(x):=\frac1M\sum_{m=0}^{M-1}f(x-\Phi(m)).
\end{align}
Then, for every integer $s\geq0$ and $r>2$,
\begin{align}\label{e:quant-Z}
\Big\|\mathcal V^r(A_Mf:M\in\mathcal G_s)\Big\|_{\ell^2(\mathbb Z)}
\lesssim_{d,D,s}\frac{r}{r-2}\|f\|_{\ell^2(\mathbb Z)}.
\end{align}
\end{theorem}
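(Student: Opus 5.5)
The plan is to reduce the variational estimate to a \emph{long} variation along the lacunary subsequence $M=q^{n}$ and a \emph{short} variation inside each block $[q^{s+n},q^{s+n+1})$, and to control both on the Fourier side using the product structure of $\Phi$. The only earlier input is the digit structure of $\Phi$ from \eqref{e:Phi}; Calder\'on transference is not needed at this stage. The long variation is the part I have not yet resolved; the short variation should go through with the estimates below.

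\emph{Product structure.} For $M=uq^n$ with $q^s\le u<q^{s+1}$, the base-$q$ prefixes of length $n$ are bijected by $\Phi$ onto $\mathcal C\cap[0,d^n)$. Hence
\[
A_{uq^n}f=\nu_u^{(d^n)}*\mu_n*f,\qquad \mu_n:=\mathop{\ast}_{j<n}\Big(\tfrac1q\sum_{a\in D}\delta_{ad^j}\Big),\qquad \nu_u:=\tfrac1u\sum_{k<u}\delta_{\Phi(k)},
\]
where $\nu_u^{(d^n)}$ denotes $\nu_u$ dilated by $d^n$, and $\nu_u$ is supported in $[0,d^{s+1})$. On the Fourier side, with $m(\eta):=\frac1q\sum_{a\in D}e(a\eta)$, this reads
\[
\widehat{A_{uq^n}}(\xi)=\widehat{\nu_u}(d^n\xi)\prod_{j<n}m(d^j\xi).
\]
The basic elementary inequality comes from $m$ being the mean of the unit vectors $e(a\eta)$, one of which is $1$:
\[
|1-e(a\eta)|^2\lesssim_q 1-|m(\eta)|^2\quad\text{for all } a\in D,\qquad\text{in particular}\qquad |1-m(\eta)|^2\lesssim_q 1-|m(\eta)|^2 .
\]
Writing $m_j:=m(d^j\xi)$, one has the telescoping identity
\[
\sum_n(1-|m_n|^2)\prod_{j<n}|m_j|^2=1-\prod_{j}|m_j|^2\le1 .
\]
This gives $\sum_n|\widehat{\mu_{n+1}}-\widehat{\mu_n}|^2\lesssim1$ uniformly in $\xi$. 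More generally, any difference of multipliers whose inner factors involve only digits $a\in D$ at levels $n,\dots,n+s$ is dominated by $\sum_{i\le s}(1-|m_{n+i}|^2)^{1/2}$.

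\emph{Splitting.} Times in $\mathcal G_s$ increase first in $n$ and then in $u$. Hence, with $C_s$ depending only on $q$ and $s$,
\[
\mathcal V^r(A_M f:M\in\mathcal G_s)\le \mathcal V^r(A_{q^{s+n}}f:n)+C_s\Big(\sum_n\sum_{q^s\le u<q^{s+1}}|A_{uq^n}f-A_{q^{s}q^n}f|^2\Big)^{1/2}+C_s\Big(\sum_n|A_{q^{s+n+1}}f-A_{q^{s+n}}f|^2\Big)^{1/2}.
\]
The last term is handled by Plancherel and the telescoping bound.

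\emph{Short variation.} For the middle term I will compare both $A_{uq^n}$ and $A_{q^sq^n}$ to $\mu_{n+s+1}$. The aim is to write each difference multiplier as $\prod_{j<n}m_j$ times a factor bounded by $\sum_{i\le s}(1-|m_{n+i}|^2)^{1/2}$. Here I expect a genuine difficulty: $\prod_{j<n}|m_j|^2(1-|m_{n+i}|^2)$ need not telescope when the intermediate factors $m_n,\dots,m_{n+i-1}$ are small. To handle it I will split according to whether $|m_{n+i'}|\ge1/2$ for all $i'<i$. If so, the missing factors are reinserted at a cost of $4^s$. If not, $|\widehat{\mu_{n+i'+1}}|$ is already at most half of $|\widehat{\mu_n}|$, and I expect geometric decay in $n$ past that level to sum the contributions, with a loss depending only on $s$.

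\emph{Long variation (main obstacle).} I expect the long variation $\mathcal V^r(\mu_{n}*f:n)$, with the sharp constant $\frac r{r-2}$, to be the main obstacle. The plan is Bourgain's/Jones--Seeger--Wright's method: compare $\mu_n*f$ to a martingale $\mathbb E_nf$ for which L\'epingle's inequality gives the factor $\frac r{r-2}$, and control $(\sum_n|\mu_n*f-\mathbb E_nf|^2)^{1/2}$ in $\ell^2$ by Plancherel. The natural martingale is the \emph{reverse} martingale obtained by lifting to $\mathbb Z\times D^{\mathbb N}$. There, averaging over the first $n$ digits of $x-\sum_{j<N}a_jd^j$ is a conditional expectation, for each fixed truncation $N$. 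The difficulty is that the translations $\sum_{n\le j<N}a_jd^j$ depend on $n$, so the lift does not directly return to $\omega=0$. I would try first to absorb this by a square-function error that is again controlled by $\sum_n(1-|m_n|^2)\prod_{j<n}|m_j|^2$. If that fails, I would fall back on a $d$-adic martingale $\mathbb E_n$ on $\mathbb Z_d$ projecting onto residues mod $d^n$, where the needed multiplier bounds are $|\widehat{\mu_n}(\xi)-\mathbb 1_{d^n\xi\in\mathbb Z}|$.
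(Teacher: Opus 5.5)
Your reduction to the lacunary averages $B_{n+s}f=A_{q^{n+s}}f$ plus a square-summable error is the paper's, and your short-variation step works. The difficulty you anticipate there is illusory: $|\sin(dx)|\le d|\sin x|$ gives $Q(dt)\le d^2Q(t)$, hence $Q_{n+i}\le d^{2i}Q_n$. So every squared error multiplier is $\lesssim_{d,D,s}|P_n|^2Q_n=|P_n|^2-|P_{n+1}|^2$, which telescopes, and the case split on $|m_{n+i'}|\ge 1/2$ is unnecessary.

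The genuine gap is the long variation $\|\mathcal V^r(B_nf:n\ge0)\|_{\ell^2}\lesssim\frac{r}{r-2}\|f\|_{\ell^2}$, i.e.\ Proposition~\ref{p:lacprop}. This carries essentially all of the analytic content, you leave it open, and neither route you sketch works as stated, because Cantor multipliers have no Fourier decay. Take $d=3$, $D=\{0,2\}$, $n_0\ge1$ and $\xi_0=\frac12 3^{-n_0}$. Every $3^j\xi_0$ with $j\ge n_0$ is a half-integer, so $P_n(\xi_0)=c_*:=\prod_{i=1}^{n_0}\frac12\bigl(1+e(-3^{-i})\bigr)$ for all $n\ge n_0$, with $0<|c_*|\le\frac12$.

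\emph{The martingale comparison.} $B_n$ multiplies a wave packet at frequency $\xi_0$ by $c_*$ at every scale $n\ge n_0$. By contrast, $3$-adic block averages of length $3^n\gg3^{n_0}$ nearly annihilate it. So the Jones--Seeger--Wright square function $\bigl(\sum_{n\le N}|B_nf-\mathbb E_nf|^2\bigr)^{1/2}$ has norm $\gtrsim(N-n_0)^{1/2}\|f\|_{\ell^2}$ on such packets. With your symbol $\mathbf 1_{d^n\xi\in\mathbb Z}$, which vanishes almost everywhere, you would need $\sum_n|P_n(\xi)|^2\lesssim1$, and this fails near $\xi_0$ by continuity.

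\emph{The lift.} The lift to $\mathbb Z\times D^{\mathbb N}$ fails for the same reason. Undoing the $n$-dependent translations by one global shift $x\mapsto x+\sum_{j<N}a_jd^j$ and averaging in $\omega$ produces operators with multiplier $|P_n|^2$ rather than $P_n$. The discrepancy $P_n(1-\overline{P_n})=c_*(1-\overline{c_*})\neq0$ for every $n\ge n_0$, so it is not square-summable.

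What is missing is the paper's decomposition of the increments. Write $P_n=1+\sum_{k<n}P_k(m_k-1)$ and $m_k-1=-i\frac{\overline a_D}{g}H_k+R_k$ with $|R_k|\lesssim Q_k$.

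\emph{Second-order part.} One does not condition $f$ itself. Instead one builds martingale differences from \emph{mean-zero} random coefficients $c_k=v_k\overline{Z_k}/Q_k$, where $Z_k=e(Y_k\beta)-m_k(\beta)$. This has three effects:
the conditional expectations are tail partial sums;
a single global shift by $\sum_\ell Y_\ell$ followed by averaging in $\omega$ returns exactly the deterministic partial sums $\sum_{k\le J}P_kR_k$;
and $\mathbb E|c_k|^2=|v_k|^2/Q_k\lesssim Q_k\prod|m_\ell|^2$ telescopes once even and odd indices are separated (Lemmas~\ref{l:firstvar} and~\ref{l:var2}).

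\emph{First-order part.} This cannot be fed to the same machine, since $|m_k-1|^2/Q_k\approx1$ near $\mathcal Z$ and $\sum_k\mathbb E|c_k|^2$ would diverge. It is eliminated instead by the exact identity $(d-1)\sum_{k\le J}P_kH_k=\sum_{k<J}P_kG_k-H_0+dP_JH_J$. Here $|G_k|\lesssim Q_k$, so the first sum is again of the second-order type, and $\sum_J|P_JH_J|^2\lesssim1$, so the endpoint term is controlled by a square function.
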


Theorem \ref{t:main0} is obtained from Theorem \ref{t:main}, a maximal estimate for \eqref{e:ergodic-count-prefix}, and the fact that a fixed mesh $\mathcal G_s$ approximates every sufficiently large integer $M$ up to a multiplicative factor of at most $1 +q^{-s}.$ The strictly increasing map $\Phi$ then identifies the averages $C_N^T$ with a subsequence of the re-parametrized averages, up to the harmless removal of the term $\Phi(0)=0$. 

A pleasant corollary is the following statement about lacunary differentiation of deterministic Cantor sets, complementing deep work of Laba--Pramanik \cite{LP} and Shmerkin--Suomala \cite{SS} on differentiation of random Cantor sets.

\begin{cor}\label{c:cantor}
Let
\begin{align}\label{e:cantor'}
\mathcal C':=\Big\{\sum_{j=1}^\infty a_jd^{-j}:a_j\in D\Big\}\subset[0,1]
\end{align}
denote a Euclidean Cantor set, and let $\nu$ denote its natural measure (whose law is given by \eqref{e:Y} below). Then, for every $f\in L^2_{\mathrm{loc}}(\mathbb R)$,
\begin{align}
\lim_{k\to\infty}\int f(x-d^{-k}t)\ d\nu(t)=f(x)
\end{align}
for Lebesgue-almost every $x$.
\end{cor}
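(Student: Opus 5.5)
We will obtain Corollary \ref{c:cantor} from Theorem \ref{t:main} (specifically the $\mathbb R$-action version obtained by transference, or directly from Theorem \ref{t:main0}) by rescaling.

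\emph{Step 1: reformulate the operator.} Put $\nu_k$ for the pushforward of $\nu$ under $t\mapsto d^{-k}t$, so the operator is $f*\nu_k$. Since $\nu$ is the law of $Y=\sum_{j\ge1}a_jd^{-j}$ with the $a_j$ i.i.d.\ uniform on $D$ (as in \eqref{e:Y}), we have
\begin{align}
\int f(x-d^{-k}t)\,d\nu(t)=\mathbb E\Big[f\Big(x-\sum_{j\ge1}a_jd^{-j-k}\Big)\Big].
\end{align}
Truncate at depth $K\ge k$. Then $\sum_{j=1}^{K}a_jd^{-j-k}=d^{-K-k}\sum_{i=0}^{K-1}a_{K-i}d^{i}$, so the truncated average is the uniform average over the first $q^K$ elements of $\mathcal C$, namely $\{\Phi(m):0\le m<q^K\}$, dilated by $d^{-K-k}$. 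The tail contributes a shift of size $\le d^{-K-k}$.

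\emph{Step 2: transference to a measure-preserving flow.} Fix a scale and let $T_\delta$ be translation by $\delta$ on $\mathbb R$ with Lebesgue measure, which is $\sigma$-finite. Then $d^{-k}$-dilates of $\nu$ correspond to $A^{T}_{q^n}$ along $\mathcal G_0$-type times, but only as a limit in the truncation depth. A cleaner route is the self-similarity
\begin{align}
\nu_k=\frac1q\sum_{a\in D}\delta_{a d^{-k-1}}*\nu_{k+1}.
\end{align}
Iterating from $k$ down to $0$ gives $\nu_k$ written as a uniform average over digit patterns convolved with $\nu_{k'}$ for $k'\gg k$. I would instead work on the unit circle or on $\mathbb R$ with $T=$ translation by $d^{-L}$ for a large fixed $L$. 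There $f*\nu_k$ with $k\le L$ is approximated by $A^T_{q^{L-k}}$ applied to $f*\nu_L$ after reversing the digit order (uniform over all length-$(L-k)$ digit strings, so the order is irrelevant). Theorem \ref{t:main} with $s=0$ then gives a uniform-in-$L$ bound
\begin{align}
\|\mathcal V^r(f*\nu_k:k\le L)\|_{L^2}\lesssim\|f*\nu_L\|_{L^2}\le\|f\|_{L^2}.
\end{align}
Here $\mathcal V^r$ over $M=q^n$ is dominated by $\mathcal V^r$ over $\mathcal G_0$, since variation restricted to a subsequence is smaller. Monotone convergence in $L$ then yields $\mathcal V^r(f*\nu_k:k)\in L^2$, so $f*\nu_k$ converges a.e.

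\emph{Step 3: identify the limit.} For $f\in C_c$ we have $f*\nu_k\to f$ everywhere, because $\nu_k$ is supported in $[0,d^{-k}]$. Combining the variational (hence maximal) bound with the density of $C_c$ in $L^2$, the limit equals $f$ a.e. for all $f\in L^2$. Localization extends this to $L^2_{\mathrm{loc}}$, since $\nu_k$ has support in $[0,1]$.

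\emph{Main obstacle.} The delicate point is Step 2. The identity $\nu_k=\mathbb E_{\text{digits}}[\delta_{\cdot}]*\nu_L$ must match exactly the averages $A^T_{q^{L-k}}$ with shift $T^{\Phi(m)}$ equal to translation by $\Phi(m)d^{-L}$. The digits must be indexed so that the $q^{L-k}$ smallest elements of $\mathcal C$ are exactly the strings of length $L-k$. Here $\Phi(m)d^{-L}$ ranges over $\sum_{j=k+1}^{L}a_jd^{-j}$ times $d^{k}$ appropriately rescaled, and I expect a clean correspondence after rescaling the $x$-variable by $d^{k}$ per scale. If this exact matching fails, a fallback is to compare the two sides at the level of $L^2$ square functions of the differences. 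Their error is $O(d^{-(L-k)})$ in translation, which is summable after a short-variation/long-variation splitting.
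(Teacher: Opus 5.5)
Your proposal is correct, and it reaches the key variational bound by a different route from the paper's. The doubt in your ``Main obstacle'' paragraph is unfounded. Take $T_Lx:=x-d^{-L}$ on $(\mathbb R,dx)$, and split $d^nY=J_n+Y'$ with $J_n$ uniform on $\mathcal D_n$ and $Y'\sim\nu$ independent. This gives the exact identity $f*\nu_{L-n}=A^{T_L}_{q^n}(f*\nu_L)$ for $0\le n\le L$. The reason is that $d^{-L}\mathcal D_{L-k}=\{\sum_{j=k+1}^{L}a_jd^{-j}:a_j\in D\}$ is exactly the range of the first $L-k$ digits of $d^{-k}Y$, and the remaining tail has law $\nu_L$. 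Consequently:
\begin{itemize}
\item no per-scale rescaling of $x$ is needed;
\item the reversal $k\mapsto L-k$ does not affect variation;
\item only the times $q^n$ occur, so transferring Proposition \ref{p:lacprop} (recall $A_{q^n}=B_n$) already suffices, with a constant independent of the system and hence of $L$.
\end{itemize}
So the fallback can be dropped.

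The paper instead stays on $\ell^2(\mathbb Z)$. For a fixed truncation $K<L$ it reduces by density to $d^{-L}$-step functions with coefficients $F$. It then uses the same splitting to write $\mathcal K_kf(d^{-L}(m+\vartheta))$ as a convex combination of $B_{L-k}F(m)$ and $B_{L-k}F(m-1)$, with $k$-independent weight $\nu([0,\vartheta])$. It concludes by convexity of $\mathcal V^r$, Proposition \ref{p:lacprop}, and \eqref{e:step-norm}.

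Comparing the two: your pre-smoothing by $\nu_L$ replaces the discretization and the two-cell bookkeeping. It also yields the truncated bound for every $f\in L^2(\mathbb R)$ directly, without a density step. The price is Calder\'on transference to the $\sigma$-finite system $(\mathbb R,dx,T_L)$, which Theorem \ref{t:main} already provides. The paper's argument is self-contained given the $\mathbb Z$ estimate.

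Your Step 3 correctly supplies the ``standard localization argument'' the paper leaves implicit. It uses the maximal bound via $\sup_k|f*\nu_k|\le|f*\nu_0|+\mathcal V^r(f*\nu_k:k)$, density of $C_c$, and localization via $\operatorname{supp}\nu_k\subset[0,d^{-k}]$.
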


Our arguments are sufficiently robust to address lacunary differentiation at self-similar scales with respect to non-uniform Cantor measures as well; because this question is less motivated by the ergodic setting, we leave the details to the interested reader, see Remark \ref{r:rem} below.

\medskip

Two open problems naturally present themselves. First, our arguments are entirely $L^2$-based; it would be interesting to establish a convergence result for functions with less integrability. A second question concerns polynomial images of Cantor sets, i.e.\ the pointwise convergence of
\[
\frac1{|\mathcal C_N|}\sum_{n\in\mathcal C_N}f(T^{P(n)}x),
\]
where $P\in\mathbb Z[\cdot]$ is a polynomial with integer coefficients, and $T$ is invertible (or $P(\mathbb{N}) \subset \mathbb{N}$). While it seems plausible that work of \cite{Green} could be used to address the case where $P$ is a monomial, the general case remains out of reach; we look forward to addressing it in future work.

\bigskip

The structure of the paper is as follows: in Section \ref{s:Prelim}, we collect preliminary notation and recall L\'epingle's inequality; in Section \ref{s:var}, we establish our analytic tools; and in Section \ref{s:proof}, we prove Theorems \ref{t:mainZ}, \ref{t:main}, and \ref{t:main0} in turn. We conclude with Section \ref{s:euc}, where we establish Corollary \ref{c:cantor}.

\subsection{Acknowledgements}
A. I. was supported in part by the National Science Foundation under NSF DMS-2154232. B. K. was supported in part by an ERC Starting Grant. 

\section{Preliminaries}\label{s:Prelim}

\subsection{General Notation}
Throughout, we abbreviate the complex character
\[
e(t):=e^{2\pi it};
\]
for $f\in\ell^1(\mathbb Z)$, our Fourier convention is
\begin{align}\label{e:fourier-convention}
\widehat f(\beta):=\sum_{x\in\mathbb Z}f(x)e(-x\beta),
\qquad
F^{\vee}(x)=\int_{\mathbb T} F(\beta)e(x\beta)\ d\beta,
\end{align}
and for integrable functions on a probability space $(\Omega,\mathcal B,\mathbb P)$, we write $\mathbb E  F$ for expectation and $\mathbb E  (F\mid\mathcal B')$ for conditional expectation with respect to a sub-$\sigma$-algebra $\mathcal B'\subseteq\mathcal B$.

\subsection{Asymptotic Notation}\label{sss:O}
We will make use of the modified Vinogradov notation. We use $X \lesssim Y$ or $Y \gtrsim X$ to denote
the estimate $X \leq CY$ for an absolute constant $C$ and $X, Y \geq 0.$  If we need $C$ to depend on a
parameter, we shall indicate this by subscripts, thus for instance $X \lesssim_p Y$ denotes the estimate $X \leq C_p Y$ for some $C_p$ depending on $p$. We use $X \approx Y$ as shorthand for $Y \lesssim X \lesssim Y$. We also make use of big-Oh: we let $O(Y)$  denote a quantity that is $\lesssim Y$, and similarly
$O_p(Y )$ will denote a quantity that is $\lesssim_p Y$.

\subsection{Cantor-Set Notation}
Enumerate
\[
D=\{\delta_0=0<\delta_1<\cdots<\delta_{q-1}\};
\]
for
\[
m=\sum_{j=0}^Jb_jq^j,
\qquad 0\leq b_j<q,
\]
we define
\begin{align}\label{e:Phi}
\Phi(m):=\sum_{j=0}^J\delta_{b_j}d^j,
\end{align}
so that $\Phi:\mathbb Z_{\geq0}\to\mathcal C$ is a bijection, which is also strictly increasing. Indeed, if $m<m'$ and $J$ is the largest base-$q$ digit at which they differ, then the contribution of that digit to $\Phi(m')-\Phi(m)$ is at least $d^J$, while all lower digits can contribute adversely by at most
\[
(d-1)\sum_{j=0}^{J-1}d^j=d^J-1.
\]

For sequence space functions, define
\begin{align}\label{e:AN}
A_Mf(x):=\frac1M\sum_{m=0}^{M-1}f(x-\Phi(m)),
\end{align}
and
\begin{align}\label{e:Bn}
B_n f(x):=\frac1{q^n}\sum_{a\in \mathcal{D}_n}f(x-a),
\qquad
\mathcal{D}_n:=\Big\{\sum_{j=0}^{n-1}a_jd^j:a_j\in D\Big\},
\end{align}
with $\mathcal{D}_0=\{0\}$. Since $\Phi(\{0,\dots,q^n-1\})=\mathcal{D}_n$,
\begin{align}\label{e:Aq=B}
A_{q^n}=B_n.
\end{align}

We set
\begin{align}\label{e:getc}
g:=\gcd\{a:a\in D\},
\qquad
h(t):=\sin(2\pi gt),
\end{align}
and introduce
\begin{align}\label{e:rho}
\mathcal Z:=\Big\{\frac jg:0\leq j<g\Big\}\subset\mathbb T, \qquad  \rho(t):=\operatorname{dist}_{\mathbb T}(t,\mathcal Z);
\end{align}
since $d\mathcal Z\subseteq\mathcal Z$, one has $\rho(dt)\leq d\rho(t)$.

We next define
\begin{align}\label{e:m}
m(t):=\frac1q\sum_{a\in D}e(-at),
\qquad
\overline a_D:=\frac1q\sum_{a\in D}a,
\end{align}
where the first term is the fundamental Fourier multiplier, and the second term is essentially $m'(0)$, which arises in Taylor expansion arguments. We also introduce
\begin{align}\label{e:Q}
Q(t):=1-|m(t)|^2 = \frac4{q^2}\sum_{\substack{a,b\in D\\a<b}}\sin^2(\pi(a-b)t),
\end{align}
which exactly vanishes on $\mathcal{Z}$, so that by compactness and Taylor expansion we have the approximation
\begin{align}\label{e:Qrho}
Q(t)\approx_D\rho(t)^2;
\end{align}
in particular $Q$ behaves well under dilations by $d$,
\begin{align}\label{e:QQ}
Q(dt)\lesssim_{d,D}Q(t).
\end{align}

We shall repeatedly use two elementary consequences of \eqref{e:Qrho}. If $a\in C^1(\mathbb T)$ vanishes on $\mathcal Z$, then
\begin{align}\label{e:first-vanishing}
|a(t)|^2\lesssim_{a,D}Q(t),
\end{align}
and if $a\in C^2(\mathbb T)$ and both $a$ and $a'$ vanish on $\mathcal Z$, then
\begin{align}\label{e:second-vanishing}
|a(t)|\lesssim_{a,D}Q(t);
\end{align}
these follow from Taylor expansion.

\subsection{L\'epingle's Inequality}
The principal martingale input is the following form of L\'epingle's inequality \cite{LE}; we shall always use it with $r>2$.

\begin{proposition}[L\'epingle]\label{p:LE}
Let $F\in L^2(\Omega,\mathcal{B},\mathbb P)$ and let $(\mathcal{B}_j)_{j=0}^N$ be a finite monotone family of sub-$\sigma$-algebras of $\mathcal{B}$. Then
\begin{align}
\Big\|\mathcal V^r\big(\mathbb E(F\mid\mathcal{B}_j):0\leq j\leq N\big)\Big\|_{L^2(\Omega)}
\lesssim\frac{r}{r-2}\|F\|_{L^2(\Omega)}.
\end{align}
\end{proposition}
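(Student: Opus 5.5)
The statement to prove is Lépingle's inequality, Proposition \ref{p:LE}, which is classical. I would reduce it to two standard martingale facts: Doob's maximal inequality, and the fact that a martingale's $r$-variation with $r>2$ is controlled by its square function together with a stopping-time (jump-counting) argument.

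\textbf{Setup.} Since the filtration is finite and monotone, I write $F_j:=\mathbb E(F\mid\mathcal B_j)$. If the family is decreasing, I reverse the index; the variation is invariant under reversal, and $(F_{N-j})_j$ is again a martingale with respect to the reversed (now increasing) filtration. So I may assume $(F_j)_{j=0}^N$ is a martingale with $\|F_N\|_{L^2}\le\|F\|_{L^2}$.

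\textbf{Step 1: dyadic decomposition of the variation.} I use the standard bound
\[
\mathcal V^r(F_j)\lesssim \Big(\sum_{k\in\mathbb Z}2^{kr}N_{2^k}\Big)^{1/r},
\]
where $N_\lambda$ is the maximal number of disjoint $\lambda$-jumps. Concretely, I split each increment in a chosen chain of times according to its dyadic size and count the jumps of each size. I then truncate at the level of the maximal function $F^*=\sup_j|F_j|$, since jumps of size $2^k$ only occur where $F^*\gtrsim 2^k$.

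\textbf{Step 2: the jump inequality.} The core estimate is
\[
\big\|\lambda N_\lambda^{1/2}\big\|_{L^2}\lesssim\|F\|_{L^2}
\]
uniformly in $\lambda$. More precisely, I need it in the stronger form $\sup_\lambda\|\lambda N_\lambda^{1/2}\|_{L^{2,\infty}}\lesssim\|F\|_2$, i.e.\ as the layered estimate of Pisier--Xu and Bourgain.

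To prove it, I define stopping times $\tau_0=0$ and $\tau_{i+1}=\min\{j>\tau_i:|F_j-F_{\tau_i}|\ge\lambda/2\}$. The count $N_\lambda$ is then bounded by the number of finite $\tau_i$. By optional stopping, the increments $F_{\tau_{i+1}}-F_{\tau_i}$ are orthogonal, which gives
\[
\lambda^2\,\mathbb E N_\lambda\lesssim \mathbb E\sum_i|F_{\tau_{i+1}}-F_{\tau_i}|^2\le\|F\|_2^2.
\]
This is the main obstacle: summing these single-scale bounds over $k$ with the weights $2^{k(r-2)}$ must yield the factor $r/(r-2)$ rather than a logarithmic loss. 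To handle it, I would localize to the sets $\{F^*\sim 2^\ell\}$, or equivalently apply the stopping argument to the truncated martingale stopped when $|F_j|$ first exceeds $2^{\ell}$. This gives
\[
\sum_k 2^{kr}\,\mathbb E\big(N_{2^k}\mathbf 1_{F^*\sim 2^\ell}\big)\lesssim\sum_{k\le\ell+1}2^{k(r-2)}\,2^{2\ell}\,\mathbb P(F^*\gtrsim 2^\ell)\lesssim\frac{1}{1-2^{-(r-2)}}\,2^{r\ell}\,\mathbb P(F^*\gtrsim 2^\ell),
\]
noting that $1/(1-2^{-(r-2)})\lesssim r/(r-2)$. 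The localized bound is rigorous because the stopped martingale has $L^2$ mass at most $\|F\,\mathbf 1_{F^*\gtrsim2^{\ell}}\|_2^2$, via Doob's weak-type argument applied to the first time $|F_j|>2^\ell$.

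\textbf{Step 3: conclusion.} Combining the localized pieces and using Doob's $L^2$ maximal inequality $\|F^*\|_2\le2\|F\|_2$, I sum over $\ell$ to get $\|\mathcal V^r(F_j)\|_2\lesssim \frac{r}{r-2}\|F\|_2$. The exponent $1/r$ only improves the constant, so the stated bound follows.

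Alternatively, since the paper cites \cite{LE}, it would suffice to quote the sharp form of the constant from Pisier--Xu or from the survey \cite{JSW}, where exactly this $r/(r-2)$ dependence is recorded.
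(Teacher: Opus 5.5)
The paper does not prove Proposition~\ref{p:LE}; it simply quotes it from \cite{LE}. Your closing fallback (cite it) is therefore exactly what the paper does. As a self-contained proof, however, your sketch has a genuine gap at the localization in Step~2.

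Step~1 and the single-scale bound $\lambda^2\,\mathbb E N_\lambda\le 4\|F\|_2^2$ from your stopping times are fine. (The $L^{2,\infty}$ form you call ``stronger'' is actually weaker.) The localized estimate
\[
\sum_k 2^{kr}\,\mathbb E\big(N_{2^k}\mathbf 1_{\{F^*\sim 2^\ell\}}\big)\lesssim_r 2^{r\ell}\,\mathbb P(F^*\gtrsim 2^\ell)
\]
is false, because $r$-variation is not controlled by $F^*$ on the level sets of $F^*$. Take $F_0=0$, $M\ge 2$, $\lambda=2^\ell$. When $F_j\le 0$, let the next increment be $+\lambda$ with probability $1-1/M$ and $-(M-1)\lambda$ with probability $1/M$; reverse both signs when $F_j>0$. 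This is a martingale. With probability $(1-1/M)^M\ge 1/4$ no large step occurs, and then the path is $0,\lambda,0,\lambda,\dots$. On that event $F^*=\lambda$, $N_\lambda\ge M$, and $\mathcal V^r\ge\lambda M^{1/r}$. So the left side is at least $\lambda^rM/4$, while the right side is $O_r(\lambda^r)$. These jumps are paid for by $\|F\|_2^2\approx\lambda^2M^2$, which is carried by the rare large excursions on \emph{other} paths.

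Your justification does not yield the claim either. The martingale stopped at $\sigma_\ell:=\min\{j:|F_j|>2^\ell\}$ has $L^2$ mass that includes $\mathbb E(|F_N|^2;\,F^*\le 2^\ell)$, so it is not localized. The martingale restarted at $\sigma_\ell$ is localized, with mass $\le\|F_N\mathbf 1_{\{F^*>2^\ell\}}\|_2^2$, but it misses every jump before $\sigma_\ell$. Finally, a bound by $\|F_N\mathbf 1_{\{F^*\gtrsim 2^\ell\}}\|_2^2$ in place of $2^{2\ell}\mathbb P(F^*\gtrsim 2^\ell)$ charges each unit of $L^2$ mass once for every dyadic level below $F^*$. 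That reintroduces, when you sum in $\ell$, exactly the loss you were trying to avoid.

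What works is to localize in \emph{time}. Cut the path at the crossing times $\sigma_\ell$ and set $G^{(\ell)}_j:=F_{j\wedge\sigma_{\ell+1}}-F_{j\wedge\sigma_\ell}$.
\begin{itemize}
\item These pieces are martingale transforms with disjoint time supports, so $\sum_\ell\|G^{(\ell)}_N\|_2^2\le\|F\|_2^2$. The overshoot step at $\sigma_{\ell+1}$ belongs to the $\ell$-th piece, and this is what absorbs the example above.
\item $G^{(\ell)}$ vanishes off $\{F^*>2^\ell\}$.
\item Before its final step, the path of the $\ell$-th window stays in $[-2^{\ell+1},2^{\ell+1}]$, so only scales $2^k\lesssim 2^\ell$ occur there.
\end{itemize}
Split $\mathcal V^r(F)$ into two parts. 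The first is the long variation over the crossing values $F_{\sigma_\ell}$; at distinct crossing times these lie in $(2^L,2^{L+1}]$ for strictly increasing $L$, so this term and the final steps are $O(F^*)$. The second is $(\sum_\ell X_\ell)^{1/r}$, where $X_\ell$ is the $r$-th power of the short variation in window $\ell$. Your single-scale bound gives $\mathbb E X_\ell\lesssim\frac{C^r}{1-2^{-(r-2)}}2^{\ell(r-2)}\|G^{(\ell)}_N\|_2^2$.

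Then use $(\sum_\ell X_\ell)^{2/r}\le\sum_\ell X_\ell^{2/r}$, H\"older on $\{F^*>2^\ell\}$, H\"older in $\ell$, and Doob. This gives $\|\mathcal V^r\|_{L^2}\lesssim(1-2^{-(r-2)})^{-1/r}\|F\|_2\lesssim\frac{r}{r-2}\|F\|_2$. Alternatively, prove the jump bound in $L^p$ for $p$ on both sides of $2$ (Burkholder's square function inequality applied to $(F_{\tau_i})_i$), derive weak-type variational bounds, and interpolate, as in \cite{JSW}.
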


\section{Variational Estimates for Random Fourier Multipliers}\label{s:var}
This section contains the analytic core of the argument; we begin with the following lemma, philosophically similar to \cite[Proposition 5.1]{B4}. 

\begin{lemma}\label{l:firstvar}
For $0\leq k\leq K$, let $Y_k$ be bounded, integer-valued, independent random variables, and put
\[
\mathbf{m}_k(\beta):=\mathbb E e(Y_k\beta).
\]
Suppose that $c_k(\beta,Y_k)\in L^2(\Omega)$ satisfy
\begin{align}\label{e:random-coeff-hyp}
\mathbb E c_k(\beta,Y_k)=0,
\qquad
\sum_{k=0}^K\mathbb E|c_k(\beta,Y_k)|^2\leq \Lambda^2
\end{align}
for almost every $\beta$. Set
\[
\mathbf{v}_k(\beta):=\mathbb E\big[c_k(\beta,Y_k)e(Y_k\beta)\big]
\]
and
\begin{align}\label{e:TJ-random}
T_Jf(x):=\int_{\mathbb T}\widehat f(\beta)
\Big(\sum_{k=0}^J \mathbf{v}_k(\beta)\prod_{\ell<k}\mathbf{m}_\ell(\beta)\Big)e(\beta x)\,d\beta.
\end{align}
Then, for every $r>2$,
\begin{align}\label{e:firstvar-bound}
\|\mathcal V^r(T_Jf:0\leq J\leq K)\|_{\ell^2}
\lesssim\frac{r}{r-2}\Lambda\|f\|_{\ell^2},
\end{align}
where the implicit constant is independent of $K$.
\end{lemma}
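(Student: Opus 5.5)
The plan is to realize $T_J f$ as the $\omega$-average of a reverse martingale on the product space $\mathbb Z\times\Omega$. L\'epingle's inequality (Proposition~\ref{p:LE}) then applies fibrewise, and the norm of the terminal function is computed by orthogonality of martingale differences together with Plancherel. The convexity of the variation seminorm lets the outer expectation in $\omega$ pass inside.

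\emph{Step 1: a random lift of $T_J$.} Write $S_k:=\sum_{\ell<k}Y_\ell$, so that $S_{k+1}=S_k+Y_k$. For $x\in\mathbb Z$, $\omega\in\Omega$ set
\[
d_k(x,\omega):=\int_{\mathbb T}\widehat f(\beta)\,c_k(\beta,Y_k)\,e\big(\beta(x+S_{k+1})\big)\,d\beta,
\qquad U_J:=\sum_{k\le J}d_k .
\]
By independence, $\mathbb E\big[c_k(\beta,Y_k)e(\beta Y_k)e(\beta S_k)\big]=\mathbf v_k(\beta)\prod_{\ell<k}\mathbf m_\ell(\beta)$. Hence $\mathbb E_\omega U_J(x,\cdot)=T_Jf(x)$. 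Since $\mathcal V^r$ is a seminorm, Minkowski's inequality and Cauchy--Schwarz give
\[
\mathcal V^r(T_Jf:J)(x)\le \mathbb E_\omega\,\mathcal V^r(U_J(x,\cdot):J)\le\big(\mathbb E_\omega\,\mathcal V^r(U_J(x,\cdot):J)^2\big)^{1/2}.
\]
It therefore suffices to bound $\big\|\mathcal V^r(U_J:J)\big\|_{\ell^2_x L^2_\omega}$ by $\tfrac{r}{r-2}\Lambda\|f\|_{\ell^2}$. Interchanging $\widehat f$ with the expectations is harmless, since everything is finite and the $Y_k$ are bounded.

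\emph{Step 2: change of variables and martingale structure.} For fixed $\omega$, the shift $x\mapsto z:=x+S_{K+1}$ is a bijection of $\mathbb Z$. It preserves $\ell^2$ norms and does not affect the variation in $J$ at each point. In the new variable,
\[
d_k=\int\widehat f(\beta)\,c_k(\beta,Y_k)\,e\Big(\beta\big(z-\textstyle\sum_{\ell>k}Y_\ell\big)\Big)\,d\beta .
\]
Fix $z$ and consider the decreasing family $\mathcal B_J:=\sigma(Y_{J+1},\dots,Y_K)$ on $\Omega$. Then $d_k$ is $\mathcal B_{k-1}$-measurable. By independence and $\mathbb E c_k(\beta,Y_k)=0$, we have $\mathbb E(d_k\mid\mathcal B_k)=0$. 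Thus the $d_k$ are reverse martingale differences. With $F:=U_K$, this gives $\sum_{k>J}d_k=\mathbb E(F\mid\mathcal B_J)$, and therefore
\[
U_J=F-\mathbb E(F\mid\mathcal B_J).
\]
In particular $\mathcal V^r(U_J:J)=\mathcal V^r(\mathbb E(F\mid\mathcal B_J):J)$ pointwise. The family $(\mathcal B_J)$ is monotone and finite, so Proposition~\ref{p:LE}, applied for each fixed $z$ and then summed in $z$, yields
\[
\|\mathcal V^r(U_J:J)\|_{\ell^2_zL^2_\omega}\lesssim \frac{r}{r-2}\,\|F\|_{\ell^2_zL^2_\omega}.
\]

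\emph{Step 3: the $L^2$ norm of $F$.} By orthogonality of martingale differences, for each $z$ we have $\mathbb E|F|^2=\sum_k\mathbb E|d_k|^2$. For fixed $\omega$, the function $z\mapsto d_k$ has Fourier transform $\widehat f(\beta)\,c_k(\beta,Y_k)\,e\big(-\beta\sum_{\ell>k}Y_\ell\big)$. Plancherel then gives $\sum_z|d_k|^2=\int|\widehat f|^2|c_k(\beta,Y_k)|^2d\beta$. Taking expectations and using \eqref{e:random-coeff-hyp},
\[
\|F\|^2_{\ell^2_zL^2_\omega}=\int|\widehat f(\beta)|^2\sum_k\mathbb E|c_k(\beta,Y_k)|^2\,d\beta\le\Lambda^2\|f\|_{\ell^2}^2 ,
\]
which is independent of $K$ and closes the argument.

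The main obstacle is Step 1 together with the choice of phase and filtration in Step 2. The naive forward filtration $\sigma(Y_0,\dots,Y_J)$ fails, because the factor $e(\beta Y_k)$ destroys the mean-zero property of $c_k$. The remedy is to put the full phase $e(\beta S_{k+1})$ inside $d_k$, recentre at $z=x+S_{K+1}$, and condition on the future variables. Then each $d_k$ sees only $c_k(\beta,Y_k)$ as the non-measurable factor, and the outer $\omega$-expectation recovers the multiplier $\mathbf v_k\prod_{\ell<k}\mathbf m_\ell$ exactly.
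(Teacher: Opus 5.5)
Your proposal is correct and follows essentially the same route as the paper. It uses the same reverse filtration $\sigma(Y_{J+1},\dots,Y_K)$ (the paper's $\mathcal B_{J+1}$), the same martingale differences carrying the phase $e(-\beta\sum_{\ell>k}Y_\ell)$, L\'epingle's inequality with orthogonality and Plancherel for the terminal function, and the random translation by $\sum_\ell Y_\ell$ followed by Minkowski and Jensen to recover $T_Jf$; the only difference is presentational, since you begin with the lift $U_J$ and recentre it, whereas the paper begins with $F$ and translates afterwards.
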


\begin{proof}
Following the lead of Bourgain, we adopt a martingale approach, using the (reverse) filtration given by 
\begin{align}\label{e:Bj}
\mathcal{B}_j:=\sigma(Y_j,\dots,Y_K),
\qquad
\mathcal{B}_{K+1}:=\{\emptyset,\Omega\}.
\end{align}

Abbreviating
\[
U_k(\beta,\omega):=e\Big(-\beta\sum_{\ell=k+1}^KY_\ell(\omega)\Big),
\]
the relevant function is
\begin{align}\label{e:random-F}
F(x,\omega)
:=\int_{\mathbb T}\widehat f(\beta) 
\Big( \sum_{k=0}^Kc_k(\beta,Y_k(\omega))U_k(\beta,\omega)\Big) e(\beta x)\,d\beta;
\end{align}
the key point is that for fixed $\beta$, the summands in brackets are pairwise orthogonal in $L^2(\Omega)$. Indeed, if $k<j$, then every factor in the cross term other than $c_k(\beta,Y_k)$ is independent of $Y_k$, so integration in $Y_k$ annihilates this term. Plancherel and \eqref{e:random-coeff-hyp} therefore yield the bound
\begin{align}\label{e:random-F-L2}
\mathbb{E} \sum_x |F(x,\omega)|^2
&=\int_{\mathbb T}|\widehat f(\beta)|^2 \cdot
\mathbb E\Big|\sum_{k=0}^Kc_k(\beta,Y_k)U_k(\beta)\Big|^2\,d\beta\\
&=\int_{\mathbb T}|\widehat f(\beta)|^2\sum_{k=0}^K\mathbb E|c_k(\beta,Y_k)|^2\,d\beta\\
&\leq \Lambda^2\|f\|_{\ell^2}^2.
\end{align}

We now apply L\'{e}pingle's inequality: if $\Delta_k$ denotes the $k$th summand in \eqref{e:random-F}, we can express the partial sums as conditional expectations, see \eqref{e:Bj},
\[
\mathbb E(F\mid\mathcal{B}_j)=\sum_{k=j}^K \Delta_k.
\]
Indeed, the terms with $k\geq j$ are measurable, and if $k<j$, conditioning integrates in $Y_k$ and uses the mean-zero nature of $\mathbb Ec_k=0$; the convergence of \eqref{e:random-F-L2} allows one to freely use Fubini.

If we reverse orientation by setting
\[
S_J:=\sum_{k=0}^J \Delta_k
=F-\mathbb E(F\mid\mathcal{B}_{J+1}),
\]
then by L\'epingle's inequality and \eqref{e:random-F-L2},
\begin{align}\label{e:random-S-var}
\| \|\mathcal V^r(S_J:0\leq J\leq K)\|_{L^2(\Omega)} \|_{\ell^2}
\lesssim\frac{r}{r-2}\Lambda\|f\|_{\ell^2}.
\end{align}

Next, let $Y:=\sum_{\ell=0}^KY_\ell$ and translate the entire sequence by the same random integer,
\[
H_J(x,\omega):=S_J(x+Y(\omega),\omega),
\]
noting that translation preserves the $\ell^2$ norm and pointwise variation. Since
\[
U_k(\beta,\omega)e(\beta(x+Y))
=e(\beta x)e\Big(\beta\sum_{\ell=0}^kY_\ell\Big),
\]
independence allows us to express
\[
\mathbb EH_J(x,\omega)=T_Jf(x);
\]
by Minkowski's inequality for the finite-dimensional $\ell^r$ norm, \[
\mathcal V^r(T_Jf(x):J)
\leq\mathbb E\mathcal V^r(H_J(x,\omega):J),
\]
so taking $\ell^2$ norms, applying Jensen, and using \eqref{e:random-S-var}, proves \eqref{e:firstvar-bound}.
\end{proof}

Next, for $k\geq0$, define
\begin{align}\label{e:dilate}
m_k(t):=m(d^kt),
\qquad
Q_k(t):=Q(d^kt),
\qquad
P_k(t):=\prod_{\ell=0}^{k-1}m_\ell(t),
\qquad P_0(t)=1;
\end{align}
in particular, the operator $B_k$ has multiplier $P_k$:
\begin{align}\label{e:symbol}
\widehat{B_kf}(t)=P_k(t)\widehat f(t);
\end{align}
explicitly
\begin{align}
    P_k(t) = \frac{1}{q^k} \sum_{a \in \mathcal{D}_k} e(- at).
\end{align}

Our next lemma will be the principal tool in the proof of Theorem \ref{t:mainZ}.

\begin{lemma}\label{l:var2}
Suppose $u_k:\mathbb T\to\mathbb C$ satisfy
\[
|u_k(t)|\leq \Lambda Q_k(t),
\]
and define
\begin{align}\label{e:defect-operator}
\mathcal R_Jf(x):=\int_{\mathbb T}\widehat f(\beta)
\Big(\sum_{k=0}^JP_k(\beta)u_k(\beta)\Big)e(\beta x)\,d\beta.
\end{align}
Then, for every $r>2$,
\begin{align}\label{e:defect-var-bound}
\|\mathcal V^r(\mathcal R_Jf:J\geq0)\|_{\ell^2}
\lesssim_{d,D}\frac{r}{r-2}\Lambda\|f\|_{\ell^2}.
\end{align}
\end{lemma}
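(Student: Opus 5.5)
The plan is to reduce Lemma \ref{l:var2} to the martingale mechanism of Lemma \ref{l:firstvar}, with $Y_k := -d^k a_k$ and $a_k$ independent and uniform on $D$. With this choice $\mathbb E e(Y_k\beta)=m_k(\beta)$ and $\prod_{\ell<k}\mathbf m_\ell=P_k$. Since $|u_k|\le \Lambda Q_k$ and $Q_k=1-|m_k|^2$, we may write $u_k=\gamma_k Q_k$ with $|\gamma_k|\le\Lambda$; we set $\gamma_k := 0$ wherever $Q_k = 0$, which is consistent since $u_k$ vanishes there. The natural candidate coefficient is
\[
c_k(\beta,Y_k):=\gamma_k(\beta)\big(\overline{e(Y_k\beta)}-\overline{m_k(\beta)}\big).
\]
It has mean zero and gives
\[
\mathbf v_k=\mathbb E[c_ke(Y_k\beta)]=\gamma_k(1-|m_k|^2)=u_k,
\]
so the multiplier of $T_J$ in \eqref{e:TJ-random} is exactly $\sum_{k\le J}P_ku_k$. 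Its energy is $\mathbb E|c_k|^2=|\gamma_k|^2Q_k$. Lemma \ref{l:firstvar} would then finish the argument, except that $\sum_kQ_k(\beta)$ is \emph{not} uniformly bounded: the orbit $d^k\beta$ may stay far from $\mathcal Z$ for many $k$.

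The fix I would pursue is to use the decay of $P_k$. The key identity is the telescoping bound
\[
|P_{k+1}|^2=|P_k|^2(1-Q_k), \qquad\text{so}\qquad \sum_k|P_k(\beta)|^2Q_k(\beta)\le1 .
\]
I would therefore rerun the proof of Lemma \ref{l:firstvar} rather than apply it as a black box. Its proof only uses two things: that the summands $\Delta_k$ of $F$ are martingale differences for the reverse filtration $\mathcal B_j=\sigma(Y_j,\dots,Y_K)$, and that after the random translation by $Y=\sum Y_\ell$ the expectation reproduces the multiplier. I would allow $c_k$ to depend on $\beta$ and on $Y_k$, and additionally to carry a deterministic weight. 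The aim is to build the martingale $\mathbb E\big(e(\beta Y)\mid\mathcal B_j\big)=e\big(\beta\sum_{\ell\ge j}Y_\ell\big)P_j(\beta)$, whose differences
\[
e\Big(\beta\sum_{\ell>k}Y_\ell\Big)P_k(\beta)\big(e(\beta Y_k)-m_k(\beta)\big)
\]
have squared $L^2(\Omega)$-norm exactly $|P_k|^2Q_k$, which is summable by the telescoping bound. The goal is to arrange that each difference, multiplied by $\gamma_k$ and by a suitable conjugate phase, averages after translation to $P_ku_k$ rather than $|P_k|^2u_k$. This would give the $L^2$ bound $\int|\widehat f|^2\Lambda^2\sum_k|P_k|^2Q_k\le\Lambda^2\|f\|^2$ in place of \eqref{e:random-F-L2}. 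L\'epingle (Proposition \ref{p:LE}) and Minkowski then conclude exactly as before.

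The main obstacle is precisely this matching of phases. If one puts $P_k$ into the difference and then takes expectation after translation, a naive choice produces $|P_k|^2$ or $P_k\overline{P_k}$ factors instead of $P_k$. If this cannot be arranged directly, my fallback is a stopping-time decomposition in $\beta$. Set $S_k(\beta):=\sum_{\ell<k}Q_\ell(\beta)$, so that $|P_k|\le e^{-S_k/2}$. I would split the $k$-range, for each $\beta$, into blocks $I_j(\beta)=\{k:S_k\in[j,j+1)\}$, on each of which $\sum_{k\in I_j}Q_k\le 2$. On block $j$ one would like to factor $P_k=P_{k_j}\prod_{k_j\le\ell<k}m_\ell$ with $|P_{k_j}|\le e^{-j/2}$, and apply Lemma \ref{l:firstvar} to the tail variables $Y_{k_j},Y_{k_j+1},\dots$ with energy $\lesssim\Lambda^2$. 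The decay $e^{-j/2}$ would then be summed over $j$; the blocks vary measurably in $\beta$, so on the Fourier side this is legitimate. The difficulty in this route is that $k_j$ depends on $\beta$, so the variation in $J$ of the sum over blocks must be controlled. I would handle it by noting that for fixed $j$ the partial sums in $J$ are still conditional expectations, and by absorbing the $\beta$-dependent starting index into $c_k=0$ for $k<k_j(\beta)$. The dependence on $d,D$ enters only through \eqref{e:Qrho} and \eqref{e:QQ}.
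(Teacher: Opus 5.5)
There is a genuine gap: neither route proves the estimate, and the first cannot work as framed. Let $\Delta_k(\beta,\omega)$ be the $k$th Fourier-side difference of any martingale for the reverse filtration $\mathcal B_k=\sigma(Y_k,\dots,Y_K)$ generated by \emph{all} digits. Suppose that after translation by $W=\sum_\ell Y_\ell$ its expectation is $P_ku_k$. Since $\Delta_k$ is $\mathcal B_k$-measurable and $\mathbb E(\Delta_k\mid\mathcal B_{k+1})=0$, independence gives, wherever $P_k\neq0$,
\[
u_k=\mathbb E\Big[\Delta_k\,e\Big(\beta\sum_{\ell\geq k}Y_\ell\Big)\Big]=\mathbb E\Big[\Delta_k\,e\Big(\beta\sum_{\ell>k}Y_\ell\Big)\big(e(\beta Y_k)-m_k(\beta)\big)\Big].
\]
Cauchy--Schwarz then gives $\mathbb E|\Delta_k|^2\geq|u_k|^2/Q_k$. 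So the total energy is at least $\sum_k|u_k|^2/Q_k$, which equals $\Lambda^2\sum_kQ_k$ when $|u_k|=\Lambda Q_k$ --- exactly the divergent quantity. The ``phase matching'' is therefore an obstruction, not a technicality: translation by the full random sum already produces all of $P_k$, so any deterministic decay you put into $\Delta_k$ reappears in the output.

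Your fallback hits the same wall. Setting $c_k=0$ for $k<k_j(\beta)$ does not remove $P_{k_j(\beta)}$ from the output, since Lemma \ref{l:firstvar} always produces $\prod_{\ell<k}\mathbf m_\ell=P_k$. The bound above then makes block $j$ cost at least $\sum_{k\in I_j}|u_k|^2/Q_k$, with no factor $e^{-j}$, and summing over the blocks (infinitely many for a.e.\ $\beta$) diverges. Randomizing only the digits $\geq k_j(\beta)$ would require translating by $\sum_{\ell\geq k_j(\beta)}Y_\ell$. That depends on $\beta$, so it is a unimodular Fourier multiplier rather than a translation, and it need not preserve $\|\mathcal V^r(\cdot)\|_{\ell^2}$. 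Slicing $\widehat f$ by the value $s=k_j(\beta)$ gives decay on each slice. But recombining infinitely many Fourier-disjoint slices needs an orthogonality for variation norms that is not available.

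The missing idea is to randomize a $\beta$-\emph{independent} subset of digits and treat the complementary digits deterministically. The paper splits by parity: for $k_j=2j+\varepsilon$ it uses only $Y_j=-d^{k_j}\mathsf D_j$. It absorbs the product of the $m_\ell$, $\ell<k_j$, of opposite parity into $v_j$. The Cauchy--Schwarz cost then becomes $|v_j|^2/Q_{k_j}$, which is at most $\Lambda^2Q_{k_j}$ times the product of the corresponding $|m_\ell|^2$. Next, \eqref{e:QQ}, in the form $Q_{k_j}\lesssim Q_{k_j-1}$, moves the $Q$-factor onto the opposite parity class. There the telescoping bound $\sum_jQ_{n_j}\prod_{i<j}|m_{n_i}|^2\leq1$, valid along any increasing sequence, gives $\sum_j\mathbb E|c_j|^2\lesssim\Lambda^2$. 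Lemma \ref{l:firstvar} then applies to each parity, and the even and odd partial sums are recombined at the end. Your choice of $c_k$, your diagnosis of the divergence of $\sum_kQ_k$, and the telescoping identity are all correct. But \eqref{e:QQ}, which you cite, never actually enters your argument, and that is where the missing step lies.
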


\begin{proof}
We first split the summands into their even and odd parts: for $\varepsilon\in\{0,1\}$, put
\[
k_j:=2j+\varepsilon,
\]
and define
\begin{align}\label{e:vj}
v_j(t):=u_{k_j}(t)
\prod_{\substack{\ell<k_j\\ \ell \not \equiv k_j\  \mod 2}}m_\ell(t),
\end{align}
so that we can express
\begin{align}\label{e:parity-factorization}
P_{k_j}(t)u_{k_j}(t)=v_j(t)\prod_{i<j}m_{k_i}(t).
\end{align}
We will show by algebraic manipulation that Lemma \ref{l:firstvar} can be applied to these multipliers; without this splitting, the randomization below would lead to the generally unbounded quantity $\sum_k Q_k(t)$.

Let $\mathsf D_j$ be independent random variables uniformly distributed on $D$, and set
\[
Y_j:=-d^{k_j}\mathsf D_j,
\]
so that $\mathbb Ee(Y_jt)=m_{k_j}(t)$. If
\[
Z_j(t):=e(Y_jt)-m_{k_j}(t),
\]
then $\mathbb EZ_j=0$ and $\mathbb E|Z_j|^2=Q_{k_j}$; define
\begin{align}\label{e:cj-definition}
c_j(t,\mathsf D_j):=\frac{v_j(t)\overline{Z_j(t)}}{Q_{k_j}(t)} \cdot \mathbf{1}_{\{ t : Q_{k_j}(t) > 0 \}},
\end{align}
noting that the bound $|u_{k_j}|\leq \Lambda Q_{k_j}$ forces $v_j=0$ whenever $Q_{k_j}$ vanishes. Straightforward computation then yields
\begin{align}\label{e:cj-identities}
\mathbb Ec_j=0,
\qquad
\mathbb E[c_je(Y_j\cdot)]=v_j,
\qquad
\mathbb E|c_j|^2=
\begin{cases}
\frac{|v_j|^2}{Q_{k_j}},&Q_{k_j}>0,\\
0,&Q_{k_j}=0,
\end{cases}
\end{align}
so
\begin{align}\label{e:cj-first-bound}
\mathbb E|c_j|^2
\leq \Lambda^2Q_{k_j}
\prod_{\substack{\ell<k_j\\ \ell \not \equiv k_j\  \mod 2}}|m_\ell|^2.
\end{align}

For the odd subseries, $k_j=2j+1$, \eqref{e:QQ} and \eqref{e:cj-first-bound} give
\begin{align}\label{e:cj-odd-bound}
\mathbb E|c_j|^2
\lesssim_{d,D}\Lambda^2Q_{2j}\prod_{i<j}|m_{2i}|^2;
\end{align}
for the even subseries, the term $j=0$ satisfies $\mathbb E|c_0|^2\leq \Lambda^2Q_0\leq \Lambda^2$, while for $j\geq1$,
\begin{align}\label{e:cj-even-bound}
\mathbb E|c_j|^2
\lesssim_{d,D}\Lambda^2Q_{2j-1}\prod_{i=0}^{j-2}|m_{2i+1}|^2.
\end{align}
Since by telescoping, for any increasing sequence $n_0<\cdots<n_M$,
\begin{align}\label{e:defect-telescope}
\sum_{j=0}^MQ_{n_j}\prod_{i<j}|m_{n_i}|^2
&=\sum_{j=0}^M\Big(\prod_{i<j}|m_{n_i}|^2-\prod_{i\leq j}|m_{n_i}|^2\Big)\\
&=1-\prod_{j=0}^M|m_{n_j}|^2\leq1,
\end{align}
for either parity we obtain the bound
\begin{align}\label{e:cj-total}
\sum_{j\geq0}\mathbb E|c_j(t,\mathsf D_j)|^2\lesssim_{d,D}\Lambda^2;
\end{align}
Lemma \ref{l:firstvar}, together with \eqref{e:parity-factorization}, controls the variation of the partial sums of the even and odd subseries separately.

To recover the original order, write $a_k:=P_ku_k$ and set
\[
E_j:=\sum_{i=0}^ja_{2i},
\qquad
O_j:=\sum_{i=0}^ja_{2i+1},
\qquad
O_{-1}:=0.
\]
Then
\[
\sum_{k=0}^{2j}a_k=E_j+O_{j-1},
\qquad
\sum_{k=0}^{2j+1}a_k=E_j+O_j.
\]
If we define
\begin{align}
    \mathcal{R}_J^{E} f &:= \int_{\mathbb{T}} \widehat{f}(\beta) \Big( \sum_{2i \leq J} P_{2i}(\beta) u_{2i}(\beta) \Big) e(\beta x) \ d\beta, \qquad \text{and} \\
    \mathcal{R}_J^{O} f &:= \int_{\mathbb{T}} \widehat{f}(\beta) \Big( \sum_{2i +1 \leq J} P_{2i+1}(\beta) u_{2i+1}(\beta) \Big) e(\beta x) \ d\beta,
\end{align}
then apply the triangle inequality
\begin{align}
    \| \mathcal{V}^r( \mathcal{R}_J f : J \geq 0) \|_{\ell^2} \leq     \| \mathcal{V}^r( \mathcal{R}_J^E f : J \geq 0) \|_{\ell^2} +     \| \mathcal{V}^r( \mathcal{R}_J^O f : J \geq 0) \|_{\ell^2}
\end{align}
and conclude, noting that repetition does not increase the variation, and that the initial contribution to $R_0^O$ is bounded by Plancherel,
\[ \| (\widehat{f} P_1 u_1)^{\vee} \|_{\ell^2} \leq \Lambda \|f \|_{\ell^2}.\]
\end{proof}

With Lemma \ref{l:var2}, we can quickly complete the main argument.

\section{Proofs of the Main Theorems}\label{s:proof}
We begin with complete blocks.

\begin{proposition}\label{p:lacprop}
For every $r>2$,
\begin{align}\label{e:lacprop}
\|\mathcal V^r(B_nf:n\geq0)\|_{\ell^2}
\lesssim_{d,D}\frac{r}{r-2}\|f\|_{\ell^2}.
\end{align}
\end{proposition}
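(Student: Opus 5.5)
The plan is to telescope $B_n$ and reduce everything to Lemma \ref{l:var2} plus an elementary square-function piece. Since $P_{k+1}=P_k m_k$, we have
\[
P_n(t)=1+\sum_{k=0}^{n-1}P_k(t)\big(m_k(t)-1\big).
\]
The multiplier $m_k-1=m(d^k\cdot)-1$ vanishes on $\mathcal Z$: for $t=j/g$ every $a\in D$ gives $e(-at)=1$, because $g\mid a$. However, its derivative $-2\pi i\,\overline a_D$ (at $t\in\mathcal Z$, after the dilation) does not vanish. So $m-1$ satisfies only the first-order bound \eqref{e:first-vanishing} and not the bound $|u|\lesssim Q$ demanded by Lemma \ref{l:var2}. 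I will therefore split off the linear part.

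For this I will use the odd function $h(t)=\sin(2\pi g t)$, which vanishes on $\mathcal Z$ and has derivative $2\pi g$ there. Write
\[
m(t)-1=-\frac{i\,\overline a_D}{g}\,h(t)+a(t),\qquad a:=m-1+\frac{i\overline a_D}{g}h .
\]
Then $a\in C^\infty(\mathbb T)$, and both $a$ and $a'$ vanish on $\mathcal Z$. Here one checks that $m'(j/g)=-2\pi i\overline a_D$ because $e(-aj/g)=1$, and $h'(j/g)=2\pi g$. By \eqref{e:second-vanishing}, $|a(t)|\lesssim_D Q(t)$, and hence $|a(d^kt)|\lesssim Q_k(t)$. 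Lemma \ref{l:var2} with $u_k:=a(d^k\cdot)$ therefore controls the $r$-variation of $\sum_{k<n}P_ka_k$ with the desired $\frac{r}{r-2}$ bound. The constant term $1$ contributes nothing to the variation.

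The remaining piece is $\sum_{k<n}P_k(t)h(d^kt)$. This is the main obstacle, since $h$ vanishes only to first order. I would like to absorb it into Lemma \ref{l:firstvar} as a martingale-difference term; the plan is instead to bound its variation by the $\ell^2$-norm of a square-function-type quantity. Here I will exploit $P_k=P_{k}\,$ and lacunarity. The $r$-variation with $r>2$ of partial sums is dominated by the $2$-variation, so it suffices to control
\[
\sum_k \|(P_k h_k\hat f)^\vee\|_{\ell^2}^2\le\int|\hat f|^2\sum_k|P_k(t)|^2|h(d^kt)|^2\,dt .
\]
This is still not quite enough, since we need the variation of partial sums and not just a square function. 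The $2$-variation of partial sums $\sum_{k<n}x_k$ is bounded by $\sum|x_k|$ only, not by $(\sum|x_k|^2)^{1/2}$.

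The fix I would try is to handle the $h$-term by the same randomization as in Lemma \ref{l:var2}. Choose coefficients $c_j$ with $\mathbb E[c_je(Y_j\cdot)]=v_j$, where $v_j$ is built from $h_{k_j}$ as in \eqref{e:vj}. Then $\mathbb E|c_j|^2=|v_j|^2/Q_{k_j}$, and
\[
|h_{k}|^2\lesssim Q_{k}
\]
by \eqref{e:first-vanishing}. This gives
\[
\mathbb E|c_j|^2\lesssim Q_{k_j}\prod|m_\ell|^2 ,
\]
which is exactly what the proof of Lemma \ref{l:var2} needs, since its argument used only $|u_k|^2\lesssim \Lambda^2Q_k$ in \eqref{e:cj-first-bound}. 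So I would run the proof of Lemma \ref{l:var2} verbatim with the hypothesis weakened to $|u_k|\le\Lambda Q_k^{1/2}$. The only places $|u|\le\Lambda Q$ entered are \eqref{e:cj-first-bound} and the vanishing of $v_j$ where $Q_{k_j}=0$, and both persist under the square-root hypothesis. The even/odd splitting and the telescoping \eqref{e:defect-telescope} then go through unchanged, with \eqref{e:QQ} moving $Q_{k_j}$ to $Q_{k_j-1}$.

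If this works, it even makes the $a$/$h$ decomposition unnecessary: one can take $u_k=m_k-1$ directly, since $|m-1|^2\lesssim Q$. I expect this to be the intended route, and the key check is that the weaker hypothesis indeed suffices at \eqref{e:cj-first-bound}.
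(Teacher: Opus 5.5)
There is a genuine gap in your handling of $\sum_{k\le J}P_kH_k$. Your first step matches the paper: you split $m_k-1=-i\frac{\overline a_D}{g}H_k+R_k$ (your $a$ is the paper's $R$) and send the second-order remainder to Lemma \ref{l:var2}. The problem is the first-order piece. Weakening the hypothesis of Lemma \ref{l:var2} to $|u_k|\le\Lambda Q_k^{1/2}$ does not preserve \eqref{e:cj-first-bound}. From $\mathbb E|c_j|^2=|v_j|^2/Q_{k_j}$ and $|u_{k_j}|^2\le\Lambda^2Q_{k_j}$ you only get
\[
\mathbb E|c_j|^2\le\Lambda^2\prod_{\substack{\ell<k_j\\ \ell\not\equiv k_j \bmod 2}}|m_\ell|^2 .
\]
The factor $Q_{k_j}$ cancels, and it is exactly that factor that drives the telescoping \eqref{e:defect-telescope}. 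Without it the sum diverges. If $\rho(t)\approx d^{-N}$, then $|m_\ell(t)|^2=1-O(d^{2(\ell-N)})$ for $\ell<N$, and $|h|^2\approx Q$ near $\mathcal Z$, so $\sum_j\mathbb E|c_j|^2\gtrsim N\approx\log(1/\rho(t))$; this is a true lower bound, not a lossy estimate.

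A smarter choice of $c_j$ cannot rescue this. The constraints $\mathbb Ec_j=0$ and $\mathbb E[c_je(Y_j\cdot)]=v_j$ give $v_j=\mathbb E[c_jZ_j]$, so Cauchy--Schwarz forces $\mathbb E|c_j|^2\ge|v_j|^2/Q_{k_j}$. The randomization of Lemma \ref{l:firstvar} therefore genuinely needs second-order vanishing. The same objection rules out your closing suggestion $u_k=m_k-1$: since $0\in D$ and $|D|\ge2$, $\overline a_D>0$, so $|m-1|\approx\rho$ near $\mathcal Z$.

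The missing idea is a second summation by parts that uses how $h$ transforms under $t\mapsto dt$. Set $G:=dh-m\,h(d\cdot)$. Both $G$ and $G'$ vanish on $\mathcal Z$, so $|G|\lesssim Q$. Using $P_{k+1}=P_km_k$ and $P_0=1$ one gets the exact identity
\[
(d-1)\sum_{k=0}^JP_kH_k=\sum_{k=0}^{J-1}P_kG_k-H_0+dP_JH_J .
\]
The $G$-sum is a genuinely second-order defect sum, so Lemma \ref{l:var2} controls it. The term $H_0$ does not depend on $J$ and has zero variation. The last term is no longer a partial sum but a single sequence indexed by $J$, so its variation is dominated by its square function. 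By \eqref{e:first-vanishing} and \eqref{e:endpoint-telescope}, $\sum_J|P_JH_J|^2\lesssim\sum_J(|P_J|^2-|P_{J+1}|^2)\le1$, and Plancherel finishes.

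This resolves the obstruction you yourself identified: a square function controls the variation of a sequence but not of partial sums. The telescoping converts the offending partial sum into an endpoint sequence, at the cost of a remainder that now vanishes to second order.
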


To facilitate the proof, we define
\begin{align}\label{e:R}
R(t):=m(t)-1+i\frac{\overline a_D}{g}h(t),
\qquad
G(t):=dh(t)-m(t)h(dt);
\end{align}
since for every $z\in\mathcal Z$, one has $m(z)=1$, $h(z)=h(dz)=0$, and
\[
m'(z)=-2\pi i\overline a_D,
\qquad
h'(z)=h'(dz)=2\pi g,
\]
$R,R',G,G'$ vanish on $\mathcal Z$, and \eqref{e:second-vanishing} gives
\begin{align}\label{e:Rest}
|R(t)|+|G(t)|\lesssim_{d,D}Q(t).
\end{align}
These functions will arise in the course of certain telescoping arguments below; the only significant property of them that will be used is \eqref{e:Rest}.

\begin{proof}
It suffices to work with a finite truncation, uniformly in the terminal index. Set
\[
H_k(t):=h(d^kt),
\qquad
R_k(t):=R(d^kt),
\qquad
G_k(t):=G(d^kt),
\]
so that $|R_k|+|G_k|\lesssim Q_k$, while \eqref{e:first-vanishing} gives
\begin{align}\label{e:H-square-Q}
|H_k|^2\lesssim_DQ_k.
\end{align}
Since
\begin{align}\label{e:telescope}
P_{k+1}=P_km_k,
\end{align}
we have
\begin{align}\label{e:Pn-expansion}
P_n =1+\sum_{k=0}^{n-1}P_k(m_k-1) =1+\sum_{k=0}^{n-1}P_k \cdot \Big(-i\frac{\overline a_D}{g}H_k+R_k\Big),
\end{align}
see \eqref{e:getc}, \eqref{e:m}, and \eqref{e:R}, and Lemma \ref{l:var2} controls the partial sums with $R_k$. It remains to control
\[
\mathcal V^r\Big(\big (\widehat f \cdot \sum_{k=0}^JP_kH_k\big)^\vee:J\geq0\Big).
\]

Appealing to \eqref{e:telescope} again, one obtains the exact identity
\begin{align}\label{e:H-telescope}
(d-1)\sum_{k=0}^JP_kH_k
&=\sum_{k=0}^{J-1}P_k \cdot (dH_k-m_kH_{k+1})-H_0+dP_JH_J \\
& = \sum_{k=0}^{J-1}P_k \cdot G_k-H_0+dP_JH_J;
\end{align}
the first term is controlled by Lemma \ref{l:var2}, while the term $-H_0$ is independent of $J$, so it has zero variation; it remains to control the endpoint sequence $P_JH_J$.

But, by \eqref{e:H-square-Q},
\[
\sum_{J\geq0}|P_JH_J|^2
\lesssim_D\sum_{J\geq0}|P_J|^2Q_J,
\]
and since
\begin{align}\label{e:endpoint-telescope}
|P_J|^2Q_J
=|P_J|^2(1-|m_J|^2)
=|P_J|^2-|P_{J+1}|^2,
\end{align}
we may telescope
\[
\sum_{J\geq0}|P_J|^2Q_J
=\sum_{J\geq0}\bigl(|P_J|^2-|P_{J+1}|^2\bigr)
\leq1;
\]
dominating the variation by a square sum, and applying Plancherel, \eqref{e:endpoint-telescope} yields the bound
\[
\Big\|\mathcal V^r\big((\widehat fP_JH_J)^\vee:J\geq0\big)\Big\|_{\ell^2}
\lesssim_D\|f\|_{\ell^2},
\]
completing the proof.
\end{proof}

We next record the relevant maximal estimate needed for the convergence theorem.

\begin{cor}\label{c:all-prefix-max}
The following estimate holds:
\begin{align}\label{e:all-prefix-max-Z}
\Big\|\sup_{M\geq1}|A_Mf|\Big\|_{\ell^2}\lesssim_{d,D}\|f\|_{\ell^2}.
\end{align}
\end{cor}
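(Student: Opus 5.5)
The plan is to reduce the maximal function over all prefix averages $A_M$ to the complete-block averages $B_n$, whose maximal function is controlled by Proposition \ref{p:lacprop}. Variation dominates oscillation from any fixed value, so
\[
\sup_n|B_nf|\leq |B_0f|+\mathcal V^r(B_nf:n\geq0).
\]
Taking $r=3$ in Proposition \ref{p:lacprop} then gives
\[
\|\sup_n|B_nf|\|_{\ell^2}\lesssim_{d,D}\|f\|_{\ell^2}.
\]

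Since the problem is positive, it suffices to treat $f\geq0$. For an arbitrary $M\geq1$, write its base-$q$ expansion
\[
M=\sum_{j=0}^{J}b_jq^j,\qquad b_J\neq0 .
\]
The set $\{0,\dots,M-1\}$ then splits into consecutive blocks. Going from the top digit down, for each $j$ and each $0\leq b<b_j$ there is a block of length $q^j$ of the form $\{N_{j}+bq^j,\dots,N_j+(b+1)q^j-1\}$, where $N_j=\sum_{i>j}b_iq^i$ is a multiple of $q^{j+1}$.

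On such a block, $\Phi$ acts as a translate of $\mathcal D_j$. The reason is that $\Phi$ is digitwise additive on numbers with disjoint digit supports, so
\[
\Phi(N_j+bq^j+m')=\Phi(N_j)+\delta_b d^j+\Phi(m'),\qquad 0\leq m'<q^j .
\]
Hence
\[
\sum_{m\in\text{block}}f(x-\Phi(m))=q^j\,B_jf\bigl(x-\Phi(N_j)-\delta_bd^j\bigr).
\]
Because the shifts depend on $M$, I would not bound these translates pointwise by $\sup_n B_nf(x)$ directly.

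Instead, I would bound everything by a single maximal operator over complete blocks with arbitrary prefix. For $f\geq0$, every such block lies inside the complete block of level $J+1$, namely $\{0,\dots,q^{J+1}-1\}$, which contains $\{0,\dots,M-1\}$. Therefore
\[
A_Mf(x)\leq \frac{1}{M}\sum_{m<q^{J+1}}f(x-\Phi(m))=\frac{q^{J+1}}{M}B_{J+1}f(x)\leq q^2\,B_{J+1}f(x),
\]
using $M\geq q^J$. Consequently
\[
\sup_M|A_Mf|\leq q^2\sup_n B_n|f|,
\]
and the Proposition applied to $|f|$ finishes the argument. The digit decomposition above is therefore unnecessary; positivity together with the lacunary comparison $q^J\leq M<q^{J+1}$ suffices.

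The only point to check carefully is that the comparison constant $q^{J+1}/M\leq q$ is uniform in $M$. This holds because $\Phi$ is a bijection of $\{0,\dots,q^{J+1}-1\}$ onto $\mathcal D_{J+1}$ (see \eqref{e:Aq=B}), so no term is counted twice. This step is elementary, and I expect no real obstacle: all the difficulty sits in Proposition \ref{p:lacprop}. The variational statement for the full mesh $\mathcal G_s$ will need the finer digit decomposition, but the maximal corollary does not.
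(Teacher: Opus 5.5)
Your argument is correct and is essentially the paper's proof. You dominate $|A_Mf|$ by a constant multiple of $B_n|f|$, where $\mathcal D_n$ is the smallest complete block containing $\Phi(\{0,\dots,M-1\})$. You then control $\sup_n B_n|f|$ by $|f|+\mathcal V^r(B_n|f|:n\geq0)$ and Proposition \ref{p:lacprop}; the constant is in fact $q^{J+1}/M\leq q$, so your intermediate $q^2$ is merely loose, and the digit decomposition you set aside is indeed not needed.
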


\begin{proof}
Choose $n$ minimal with $M\leq q^n$. Then $M>q^{n-1}$, and the first $M$ values of $\Phi$ lie in $\mathcal{D}_n$. Therefore
\[
|A_Mf(x)|
\leq\frac1M\sum_{a\in \mathcal{D}_n}|f(x-a)|
\leq qB_n(|f|)(x),
\]
so
\[
\sup_M|A_Mf|\leq q\sup_nB_n(|f|)
\leq q\big(|f|+\mathcal V^r(B_n|f|:n\geq0)\big);
\]
Proposition \ref{p:lacprop} completes the proof.
\end{proof}


The fixed-mesh argument uses the following factorization.

\begin{lemma}\label{l:prefix-factorization}
Let
\[
\sigma_M(\beta):=\frac1M\sum_{m=0}^{M-1}e(-\beta\Phi(m))
\]
be the multiplier of $A_M$. Let $s\in\mathbb Z_{\geq0}$ and suppose that $q^s\leq u<q^{s+1}$. Then
\begin{align}\label{e:prefix-factorization}
\sigma_{uq^n}(\beta)=\sigma_u(d^n\beta)P_n(\beta).
\end{align}
In particular, $P_{n+s}(\beta) = P_n(\beta) P_s(d^n \beta)$.
\end{lemma}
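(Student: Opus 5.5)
The plan is to prove the factorization by a digit decomposition of the indices $0\leq m<uq^n$. Every such $m$ can be written uniquely as
\[
m = m' q^n + m'', \qquad 0\leq m'<u,\quad 0\leq m''<q^n;
\]
indeed $m'' \equiv m \pmod{q^n}$ and $m'=\lfloor m/q^n\rfloor$. The range $m'<u$ is exactly the condition $m<uq^n$. In base $q$, the digits of $m''$ occupy positions $0,\dots,n-1$, and the digits of $m'$ are shifted to positions $n,n+1,\dots$. Since $\Phi$ acts digitwise via \eqref{e:Phi}, this gives the key identity
\[
\Phi(m'q^n+m'') = d^n\Phi(m') + \Phi(m'').
\]
A zero digit maps to $\delta_0=0$, so there is no issue with leading digits.

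Next I will substitute this into the definition of $\sigma_{uq^n}$. The exponential splits as a product, and the double sum factors:
\begin{align}
\sigma_{uq^n}(\beta)
&=\frac{1}{uq^n}\sum_{m'=0}^{u-1}\sum_{m''=0}^{q^n-1} e\big(-\beta d^n\Phi(m')\big)e\big(-\beta\Phi(m'')\big)\\
&=\Big(\frac1u\sum_{m'=0}^{u-1}e(-d^n\beta\,\Phi(m'))\Big)\Big(\frac1{q^n}\sum_{m''=0}^{q^n-1}e(-\beta\Phi(m''))\Big).
\end{align}
The first factor is $\sigma_u(d^n\beta)$ by definition. For the second factor, $\Phi(\{0,\dots,q^n-1\})=\mathcal D_n$ bijectively, so it equals $P_n(\beta)$; this is just \eqref{e:Aq=B} together with \eqref{e:symbol}. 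That proves \eqref{e:prefix-factorization}.

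Note that the hypothesis $q^s\leq u<q^{s+1}$ is never used; the factorization holds for every $u\geq1$. For the final statement, I will take $u=q^s$, which is admissible. Then $\sigma_{q^{s+n}}=P_{n+s}$ and $\sigma_{q^s}=P_s$ by \eqref{e:Aq=B}, so \eqref{e:prefix-factorization} yields $P_{n+s}(\beta)=P_n(\beta)P_s(d^n\beta)$. As a check, this also follows directly from $P_k=\prod_{\ell<k}m(d^\ell\cdot)$.

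There is no real obstacle here. The only point needing care is the digit-shift identity for $\Phi$: the base-$q$ expansion of $m'q^n+m''$ must be the concatenation of the expansions of $m'$ and $m''$. This holds because $m''<q^n$.
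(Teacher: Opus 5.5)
Your proposal is correct and follows the paper's proof essentially verbatim: the unique splitting $m = vq^n + r_0$, the carry-free identity $\Phi(vq^n+r_0)=d^n\Phi(v)+\Phi(r_0)$, and factoring the double sum. Your explicit derivation of $P_{n+s}(\beta)=P_n(\beta)P_s(d^n\beta)$ by taking $u=q^s$, and your remark that the hypothesis $q^s\leq u<q^{s+1}$ is not needed for the factorization, are both accurate; the paper leaves these points implicit.
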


\begin{proof}
Every $0\leq m<uq^n$ has a unique representation
\[
m=vq^n+r_0,
\qquad 0\leq v<u,
\quad 0\leq r_0<q^n;
\]
writing the base-$q$ expansions of $v$ and $r_0$, the two digit blocks occupy disjoint positions, so there are no carries and
\begin{align}\label{e:Phi-prefix}
\Phi(vq^n+r_0)=d^n\Phi(v)+\Phi(r_0).
\end{align}
Consequently,
\begin{align*}
\sigma_{uq^n}(\beta)
&=\frac1u\sum_{v=0}^{u-1}e(-d^n\beta\Phi(v))
\cdot\frac1{q^n}\sum_{r_0=0}^{q^n-1}e(-\beta\Phi(r_0))\\
&=\sigma_u(d^n\beta)P_n(\beta).
\end{align*}
\end{proof}

We are now prepared for the proof of our main result.
\begin{proof}[Proof of Theorem \ref{t:mainZ}]
Fix an integer $s\geq0$. For $q^s\leq u<q^{s+1}$, put
\[
b_{s,u}(\beta):=\sigma_u(\beta)-P_s(\beta).
\]
At every $z\in\mathcal Z$, all phases $e(-z\Phi(m))$ equal $1$, so $b_{s,u}(z)=0$, and since only finitely many $u$ occur for a fixed $s$, \eqref{e:first-vanishing} yields the bound
\begin{align}\label{e:bsu-Q}
|b_{s,u}(\beta)|^2\lesssim_{d,D,s}Q(\beta).
\end{align}
If we define the ``error'' multiplier by 
\begin{align}\label{e:mesh-error}
\mathcal E_{n,u}(\beta):=P_n(\beta)b_{s,u}(d^n\beta),
\end{align}
then by Lemma \ref{l:prefix-factorization},
\begin{align}\label{e:mesh-decomposition}
\sigma_{uq^n}(\beta)
=P_{n+s}(\beta)+\mathcal E_{n,u}(\beta),
\end{align}
while by \eqref{e:bsu-Q} we may bound
\begin{align}
\sum_{n\geq0}\sum_{q^s\leq u<q^{s+1}}|\mathcal E_{n,u}(\beta)|^2
&\lesssim_{d,D,s}\sum_{n\geq0}|P_n(\beta)|^2Q_n(\beta)\\
&=\sum_{n\geq0}\big(|P_n(\beta)|^2-|P_{n+1}(\beta)|^2\big)\\
&\leq1;
\end{align}
Plancherel then allows us to dispose of the error terms
\begin{align}\label{e:mesh-square}
\Big\|\Big(\sum_{n\geq0}\sum_{q^s\leq u<q^{s+1}}
|(\mathcal E_{n,u} \widehat{f})^{\vee}|^2\Big)^{1/2}\Big\|_{\ell^2}
\lesssim_{d,D,s}\|f\|_{\ell^2}.
\end{align}

Next, order the pairs $(n,u)$ lexicographically, first by $n$ and then by $u$; this agrees with the increasing numerical order of $uq^n$, since
\[
(q^{s+1}-1)q^n<q^sq^{n+1}.
\]
The averages $B_{n+s}f$ in \eqref{e:mesh-decomposition} are repeated a fixed finite number of times for each $n$, and repetitions do not increase variation; since for any scalar family $(z_{n,u})$ and $r\geq2$,
\[
\mathcal V^r(z_{n,u}:(n,u)\text{ lexicographic})^2
\lesssim \sum_{n,u}|z_{n,u}|^2,
\]
Proposition \ref{p:lacprop}, \eqref{e:mesh-decomposition}, and \eqref{e:mesh-square} yield the bound
\[
\|\mathcal V^r(A_Mf:M\in\mathcal G_s)\|_{\ell^2}
\lesssim_{d,D,s}\frac{r}{r-2}\|f\|_{\ell^2},
\]
completing the proof.
\end{proof}

Theorem \ref{t:main} follows from Theorem \ref{t:mainZ} by Calder\'on's transference principle.

\begin{proof}[Proof of Theorem \ref{t:main0}]
We first prove convergence of the averages $A_M^T$, see \eqref{e:ergodic-count-prefix}.

Fix an integer $s\geq0$. Theorem \ref{t:main} implies that $A_M^Tf$ converges almost everywhere as $M\to\infty$ through $\mathcal G_s$, for every $f\in L^2(X)$; by density, appropriately transferring Corollary \ref{c:all-prefix-max}, we may therefore restrict to $f\in L^2(X)\cap L^\infty(X)$ below. So, for every sufficiently large $M$, choose $n\geq0$ so that
\[
q^{n+s}\leq M<q^{n+s+1},
\]
and approximate $M$ by elements in $\mathcal{G}_s$ by defining
\[
u:=\Big\lfloor\frac{M}{q^n}\Big\rfloor,
\qquad
\pi_s(M):=uq^n\in\mathcal G_s.
\]
Then
\[
0\leq M-\pi_s(M)<q^n\leq q^{-s}\pi_s(M),
\]
and since $\pi_s(M)\leq M$,
\begin{align}\label{e:mesh-approximation}
|A_M^Tf-A_{\pi_s(M)}^Tf|
\lesssim \frac{M-\pi_s(M)}{M}\|f\|_{L^\infty(X)}
\lesssim q^{-s}\|f\|_{L^\infty(X)};
\end{align}
since the sequence $A_{\pi_s(M)}^T f$ converges almost everywhere for each $s$, taking $M,M'\to\infty$ in \eqref{e:mesh-approximation} gives
\[
\limsup_{M,M'\to\infty}|A_M^Tf(x)-A_{M'}^Tf(x)|
\lesssim q^{-s}\|f\|_{L^\infty(X)},
\]
and letting $s\to\infty$ proves convergence of the full sequence $A_M^Tf(x)$ for $f\in L^2(X)\cap L^\infty(X)$. 

Finally, let
\[
M(N):=|\mathcal C_N|;
\]
since $\Phi$ is strictly increasing and $\Phi(0)=0$,
\[
\mathcal C_N
=\{\Phi(1),\dots,\Phi(M(N))\},
\]
so
\[
C_N^Tf
=\frac{M(N)+1}{M(N)}A_{M(N)+1}^Tf-\frac1{M(N)}f.
\]
As $N\to\infty$, one has $M(N)\to\infty$, so convergence of the re-parametrized averages implies convergence of $C_N^Tf$; we extend this to all $L^p(X)$ functions, $2 \leq p < \infty$, by transferring Corollary \ref{c:all-prefix-max}.

\end{proof}

We conclude the paper by applying the above in the Euclidean context.

\section{Euclidean Analogues}\label{s:euc}
Once again, let $(\mathsf D_j)_{j\geq1}$ be independent random variables, each uniform on $D$, and set
\begin{align}\label{e:Y}
Y:=\sum_{j\geq1}\mathsf D_jd^{-j}
\end{align}
Let $\nu$ be the law of $Y$, the natural Cantor measure on $\mathcal C'$, see \eqref{e:cantor'}; $\nu$ is continuous because the masses of cylinder sets decay geometrically. Define
\begin{align}\label{e:Euclidean-operator}
\mathcal K_kf(x):=\int f(x-d^{-k}y)\,d\nu(y);
\end{align}
Corollary \ref{c:cantor} then follows directly from the following proposition and a standard localization argument.

\begin{proposition}\label{p:Euclidean-variation}
For every $r>2$,
\begin{align}\label{e:Euclidean-variation}
\|\mathcal V^r(\mathcal K_kf:k\geq0)\|_{L^2(\mathbb R)}
\lesssim_{d,D}\frac{r}{r-2}\|f\|_{L^2(\mathbb R)}.
\end{align}
\end{proposition}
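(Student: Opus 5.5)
Fourier-analytically, $\widehat{\mathcal K_kf}(\xi)=\widehat f(\xi)\,\widehat\nu(d^{-k}\xi)$, where (with $e(-a\xi)$ convention) $\widehat\nu(\xi)=\prod_{j\geq1}m(d^{-j}\xi)$. Writing $\Pi_k(\xi):=\widehat\nu(d^{-k}\xi)=\prod_{j>k}m(d^{-j}\xi)$, the sequence now telescopes in the opposite direction to the discrete case: $\Pi_{k-1}=\Pi_k\cdot m(d^{-k}\xi)$, with $\Pi_k\to1$ as $k\to\infty$ (for fixed $\xi$) rather than starting at $1$. I would replicate the proof of Proposition \ref{p:lacprop} in this setting, with the roles of small and large scales reversed. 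The first step is to re-run Lemma \ref{l:firstvar} and Lemma \ref{l:var2} on $\mathbb R$: the martingale argument is unchanged if $Y_k$ are real-valued bounded independent variables, here $Y_j=-d^{-j}\mathsf D_j$, and Plancherel on $\mathbb R$ replaces Plancherel on $\mathbb T$. Since the variation norm is invariant under finite truncation and reindexing, I would work on a window $0\leq k\leq K$ and relabel $\ell=K-k$, so that $\Pi_k=\Pi_K\cdot\prod_{k<j\leq K}m(d^{-j}\xi)$. The factor $\Pi_K$ is a fixed bounded multiplier (absorb it into $f$), and the remaining finite product has exactly the form $P_\ell$ from \eqref{e:dilate} in the variable $t=d^{-K}\xi$, with $m_\ell(t)=m(d^\ell t)$. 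Thus the truncated family $(\mathcal K_kf)_{k\leq K}$ is a rescaled copy of $(B_\ell)_{\ell\leq K}$ acting on $\mathbb R$ rather than $\mathbb Z$.

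Next I would carry out the expansion \eqref{e:Pn-expansion}: $P_n=1+\sum_{k<n}P_k(m_k-1)$, splitting $m_k-1=-i\frac{\overline a_D}{g}H_k+R_k$. The $R_k$-terms are handled by the real-line version of Lemma \ref{l:var2}, whose hypothesis $|R_k|\lesssim Q_k$ is pointwise and scale-free. The $H_k$-terms are handled by the exact telescoping identity \eqref{e:H-telescope}, producing $G_k$-terms (again by Lemma \ref{l:var2}), a constant term, and the endpoint term $P_JH_J$. The endpoint term is bounded by the square function using $\sum_J|P_J|^2Q_J\leq1$. None of these bounds depend on $K$, the dilation $d^{-K}$ is harmless by Plancherel on $\mathbb R$, and so letting $K\to\infty$ by monotone convergence gives \eqref{e:Euclidean-variation}.

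The main obstacle I expect is the transfer of the real-line analogue of Lemma \ref{l:firstvar}. Previously the random translate $x\mapsto x+Y(\omega)$ was by an integer; now it is a real translation, which still preserves $L^2(\mathbb R)$ norms and pointwise variation. The measurability and Fubini issues need a brief check, but the multipliers $Q,R,G,H$ are the same periodic functions evaluated at real arguments, so their pointwise estimates \eqref{e:Rest} and \eqref{e:H-square-Q} hold on all of $\mathbb R$. If the truncation/rescaling bookkeeping becomes awkward, the fallback is to prove the estimate directly with the reverse indexing $k\mapsto\Pi_k$, applying L\'epingle to the filtration generated by $\mathsf D_{K},\mathsf D_{K-1},\dots$.
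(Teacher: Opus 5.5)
Your argument is correct, but it takes a different route from the paper's.

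\textbf{Your route.} You work on the Fourier side with the exact factorization $\widehat\nu(d^{-k}\xi)=\widehat\nu(d^{-K}\xi)\,P_{K-k}(d^{-K}\xi)$ for $0\leq k\leq K$. In physical space this reads $\mathcal K_kf(x)=q^{-(K-k)}\sum_{a\in\mathcal D_{K-k}}\mathcal K_Kf(x-d^{-K}a)$. After dilating by $d^K$ and absorbing the contraction $\mathcal K_K$ into $f$, the truncated family is exactly the lattice averages $B_\ell$, $0\leq\ell\leq K$, acting on $L^2(\mathbb R)$. You then re-run Lemmas \ref{l:firstvar}, \ref{l:var2} and the proof of Proposition \ref{p:lacprop} on $\mathbb R$.

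\textbf{The paper's route.} The paper stays in physical space. By density it takes $f$ to be a $d^{-L}$-step function with $L>K$. It then observes in \eqref{e:coarse-identity} that on each cell $[m/d^L,(m+1)/d^L)$, $\mathcal K_kf$ is a convex combination of $B_{L-k}F(m)$ and $B_{L-k}F(m-1)$ with weights $\nu([0,\vartheta])$ independent of $k$. Convexity of $\mathcal V^r$ then lets it apply Proposition \ref{p:lacprop} verbatim.

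\textbf{What each buys.} The paper gets a black-box use of the discrete theorem, at the price of the step-function approximation and the cell-boundary bookkeeping. You get an exact identity valid for every $f\in L^2(\mathbb R)$ with no approximation. As written, though, you pay by re-verifying the martingale machinery on $\mathbb R$.

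That last step is avoidable. After your rescaling the operators only translate by integers. Set $\widetilde B_\ell g(x):=q^{-\ell}\sum_{a\in\mathcal D_\ell}g(x-a)$ and $g_\theta(n):=g(n+\theta)$. Then Fubini gives
\[
\int_{\mathbb R}\mathcal V^r\bigl(\widetilde B_\ell g(x):0\leq\ell\leq K\bigr)^2\,dx
=\int_0^1\sum_{n\in\mathbb Z}\mathcal V^r\bigl(B_\ell g_\theta(n):0\leq\ell\leq K\bigr)^2\,d\theta
\lesssim_{d,D}\Big(\frac{r}{r-2}\Big)^2\|g\|_{L^2(\mathbb R)}^2
\]
directly from Proposition \ref{p:lacprop}. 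For the same reason, the random variables in your real-line version of Lemma \ref{l:firstvar} would be the integer-valued $-d^{k_j}\mathsf D_j$ in the rescaled variable. So the real-translation issue you flag as the main obstacle never arises, and it would be harmless anyway.
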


\begin{proof}
First truncate the variation to $0\leq k\leq K$. By density, it is enough to prove a bound independent of $K$ for compactly supported $d^{-L}$-step functions with $L>K$:
\begin{align}\label{e:disccts}
f(x)=\sum_{m\in\mathbb Z}F(m) \cdot \mathbf1_{[m/d^L,(m+1)/d^L)}(x);
\end{align}
note that in \eqref{e:disccts}
\begin{align}\label{e:step-norm}
\|f\|_{L^2(\mathbb R)}^2=d^{-L}\|F\|_{\ell^2(\mathbb Z)}^2,
\end{align}
and that $\mathcal{K}_k f(x)$ can be expressed as a convex combination of $B_{L-k} F(m)$ and $B_{L-k} F(m-1)$ whenever 
\[ x \in [m/d^L,(m+1)/d^L).\]

More precisely, fix $0\leq k\leq K$ and put $n=L-k$. Splitting the first $n$ digits from the tail yields the decomposition
\[
d^nY=J_n+Y',
\]
where $J_n$ is uniform on $\mathcal{D}_n$, $Y'$ has the same law as $Y$, and $J_n,Y'$ are independent. For $x \in [m/d^L,(m+1)/d^L)$, write
\[
x=d^{-L}(m+\vartheta),
\qquad
m\in\mathbb Z,
\quad 0\leq\vartheta<1,
\]
so that
\[
x-d^{-k}Y=d^{-L}(m-J_n+\vartheta-Y').
\]
Since $0\leq Y'\leq1$, this point lies in the cell indexed by $m-J_n$ when $Y'\leq\vartheta$, and in the cell indexed by $m-J_n-1$ when $Y'>\vartheta$, so
\begin{align}\label{e:coarse-identity}
&\mathcal K_kf(d^{-L}(m+\vartheta)) \\
& \qquad =\nu([0,\vartheta]) \cdot B_{L-k}F(m)+(1-\nu([0,\vartheta])) \cdot B_{L-k}F(m-1).
\end{align}
Since the variation seminorm is convex, and reversing the finite order $k=0,\dots,K$ merely reverses the corresponding indices $n=L,L-1,\dots,L-K$, \eqref{e:coarse-identity} and Proposition \ref{p:lacprop} yield the bound
\begin{align*}
\int_{\mathbb R}\mathcal V^r(\mathcal K_kf(x):0\leq k\leq K)^2\,dx
&\lesssim d^{-L}\sum_m\mathcal V^r(B_nF(m):n\geq0)^2\\
&\lesssim\Big(\frac{r}{r-2}\Big)^2d^{-L}\|F\|_{\ell^2}^2\\
&=\Big(\frac{r}{r-2}\Big)^2\|f\|_{L^2(\mathbb R)}^2.
\end{align*}
Since the constant is independent of $K$ and $L$, we conclude by density and monotone convergence.
\end{proof}

\begin{remark}\label{r:rem}
    The arguments developed above are sufficiently flexible to address the case of Cantor sets with non-uniform measures, i.e.\ those for which the i.i.d. random variables $(\mathsf D_j)_{j\geq1}$ are \emph{not} uniformly distributed on $D$. One analogously addresses the discrete convolution operators
    \begin{align}
        \sum_{a_0,\dots,a_{n-1}} \mathbb{P}(\mathsf D_j = a_j, \ 0 \leq j \leq n-1) \cdot f(x-\sum_{j=0}^{n-1} a_j d^j)
    \end{align}
    and suitably rescales; the key replacement is at the level of Fourier multipliers, in which case one substitutes
    \begin{align}
        m(t) \longrightarrow \mathbb{E} e(- t \mathsf D)
    \end{align}
so that
\begin{align}
    Q(t) \longrightarrow 4 \sum_{a < b} \mathbb{P}(\mathsf D_1 = a, \ \mathsf{D}_2 = b) \cdot \sin^2(\pi(a-b)t)
\end{align}
    still vanishes (to second order) precisely on $\mathcal{Z}$.
\end{remark}

\end{document}